\documentclass[a4paper,12pt,leqno,twoside]{article}

\usepackage[english]{babel}

\usepackage{amsfonts}
\usepackage{amssymb}
\usepackage{amsthm, amsmath}
\usepackage{eucal}
\usepackage{mathrsfs}
\allowdisplaybreaks[0]
\usepackage[hypertexnames=false]{hyperref}
\usepackage{bbm}
\usepackage{graphicx}
\usepackage{caption}
\usepackage{subcaption}

\setlength{\textheight}{24cm}
\setlength{\textwidth}{16cm}
\setlength{\oddsidemargin}{2mm}
\setlength{\evensidemargin}{2mm}
\setlength{\topmargin}{-15mm}
\parskip2mm
\pagestyle{myheadings}
\newcommand\testopari{\sc\small L. Calatroni, B. D\"{u}ring and C.-B. Sch\"{o}nlieb}
\newcommand\testodispari{\sc\small ADI schemes for a fourth-order nonlinear PDE}
\markboth{\testodispari}{\testopari}

\usepackage[usenames,dvipsnames]{color}

\newcommand{\R}{\mathbb{R}}

\newtheorem{theorem}{Theorem}[section]

\newtheorem{definition}[theorem]{Definition}

\providecommand{\norm}[1]{\left\lVert#1\right\rVert}

\begin{document}

\thispagestyle{empty}

\begin{center}
{\large \bf ADI SPLITTING SCHEMES FOR A FOURTH-ORDER \\[0.1cm]
NONLINEAR PARTIAL DIFFERENTIAL EQUATION \\[0.2cm]
 FROM IMAGE PROCESSING} \\[1cm]
\end{center}

\begin{center}
                  {\sc Luca Calatroni}\\
 Cambridge Centre for Analysis, University of Cambridge\\
 Wilberforce Road, CB3 0WA, Cambridge, United Kingdom \\
 (lc524@cam.ac.uk) \\[0.4cm]
{\sc Bertram D\"{u}ring}\\
Department of Mathematics, University of Sussex\\
Pevensey II, BN1 9QH, Brighton, United Kingdom\\
 (b.during@sussex.ac.uk) \\[0.4cm]
 {\sc Carola-Bibiane Sch\"{o}nlieb} \\
 Department of Applied Mathematics and Theoretical Physics (DAMTP)\\
 University of Cambridge, Wilberforce Road, CB3 0WA, Cambridge, United Kingdom \\
 (cbs31@cam.ac.uk)
       \end{center}
\vskip0.5cm

\begin{center}
  Dedicated to Prof.\ Arieh Iserles, on the occasion of his 65th birthday.
\end{center}

\begin{abstract}

We present directional operator splitting schemes for the numerical solution of a fourth-order, nonlinear partial differential evolution equation which arises in image processing. This equation constitutes the $H^{-1}$-gradient flow of the total variation and represents a prototype of higher-order equations of similar type which are popular in imaging for denoising, deblurring and inpainting problems. The efficient numerical solution of this equation is very challenging due to the stiffness of most numerical schemes.
We show that the combination of directional splitting schemes with implicit time-stepping provides a stable and computationally cheap numerical realisation of the equation.

\end{abstract}

\centerline{--------------------------------------------------------------}
{\bf Key words:}  ADI splitting, higher-order nonlinear diffusion,
total variation, image processing.\\
\centerline{--------------------------------------------------------------}


\section{Introduction} \label{sec:int}

In this paper we propose directional operator splitting methods for the numerical solution of fourth--order nonlinear partial differential equations of the type
\begin{equation} \label{4thpde}
\begin{aligned}
& u_t =  \nabla\cdot\left(h(u)\nabla q \right) \quad q\in \partial \mathcal{E}(u)  & & \textrm{ in } \Omega\times (0,\infty),\\
& u(t=0) = u_0 & & \textrm{ in } \Omega,
\end{aligned}
\end{equation}
where $\mathcal E$ is the total variation (TV) seminorm (see, for instance, \cite{ambrosio})
\begin{equation} \label{TVfunct}
\mathcal{E}(u):=|Du|(\Omega)=\sup_{\textbf{p}\in C^{\infty}_0(\Omega;\R^2),\ \norm{\textbf{p}}\leq 1}\int_\Omega u\nabla\cdot\textbf{p}\ dx.
\end{equation}
and $\partial\mathcal{E}{u}$ is the subdifferential of $\mathcal{E}$ in $u$ (see \cite{ambrosio}). Here $\Omega\subset\R^2$ is open and bounded with Lipschitz boundary, $h:\R\rightarrow\R$, and $u_0$ a sufficiently regular initial condition. In our considerations equation \eqref{4thpde} is endowed with periodic boundary conditions. The elements $q$ of the subdifferential $\partial \mathcal E$ have the property that, if $q\in \partial \mathcal{E}(u)$, then (see \cite[Proposition 4.1]{vese}):
\begin{equation}  \label{subdiffTVel}
q=\begin{cases}
-\nabla\cdot\left(\frac{\nabla u}{|\nabla u|}\right) & \textrm{if } |\nabla u(x)|\neq 0,\\
0 & \textrm{if } |\nabla u(x)| = 0.
\end{cases}
\end{equation}
The above characterization shows the fourth-differential order and the strong nonlinearity in equation \eqref{4thpde}.


Equations of the form \eqref{4thpde} typically arise in the study of tension driven flow of thin liquid films (see \cite{myers,oron}), and have recently found applications in image processing (see, e.g., \cite{osher,burg2,schon1}). In this paper we limit ourselves to the consideration of one prototype of \eqref{4thpde}, that is we consider the special case where $h(u)\equiv 1$. The equation we obtain in that case reads
\begin{equation} \label{tvH-1}
 u_t=\Delta q,\quad q\in \partial \mathcal{E}(u), \quad \text{ in }\Omega\times (0,\infty),
\end{equation}
which constitutes a gradient flow of the total variation seminorm in the space $H^{-1}$.

In image processing problems -- such as image denoising and inpainting -- nonlinear partial differential equations of the type $\eqref{tvH-1}$ became popular due to their ability of preserving edges in the process of reconstruction (see \cite{ROF,ChL,rudin} for instance). In the latter works the authors typically deal with second--order partial differential equations, that is $L^2$-gradient flows of the total variation functional. Instead, in \cite{Me,osher,MG,LV,BuDiWei} an approach like \eqref{tvH-1} is proposed for image denoising and image decomposition. When applied to image denoising, a given noisy image $u_0$ is decomposed into its piecewise smooth part $u=u(T)$, solution of \eqref{tvH-1} at time $T$, and its oscillatory, noisy part $n$, i.e., $u_0=u+n$. Similarly, in image decomposition the piecewise smooth part $u$ represents the structure/cartoon part of the image, and the oscillatory part $n$ its texture part. The advantage of using an $H^{-1}$ subgradient of the total variation rather than an $L^2$ subgradient is that \eqref{tvH-1} shows better separability of the data into oscillatory and piecewise constant components and it reduces unwanted artifacts of the total variation filter such as staircasing, cf. \cite{osher,LV,LT}.

In \cite{burg2,schon} equation \eqref{tvH-1} is used for image inpainting, i.e. the problem of filling the missing parts of damaged images using the information from the surrounding areas. Here, the higher-order subgradient is necessary for improving upon the ability of total variation-type inpainting to smoothly connect image structures also over large distances, cf. for instance \cite{chan1,chan2,CS,CKS,MMo,ES}. In \cite{burg2} the inpainted image $u$ is reconstructed from a given image $f\in L^2(\Omega)$ which is damaged inside the so-called inpainting domain $D\subset\Omega$ by evolving it via the flow
\begin{equation}  \label{tvh-1inp}
\begin{aligned}
& u_t=\Delta q,\quad q\in \partial TV(u), & & \textrm{ in } D,\\
& u = f & & \textrm{ in } \Omega\setminus D,
\end{aligned}
\end{equation}
where
\begin{equation*}
 TV(u):=\left\{\begin{aligned}
                 |Du|(\Omega)\quad&\text{if }\norm{u}_\infty\leq 1\quad\text{a.e. in }\Omega, \\
                 +\infty\quad&\text{otherwise}.
                 \end{aligned}\right.
\end{equation*}
Here, the $L^{\infty}$-bound on solutions of \eqref{tvh-1inp} is a technical assumption needed for the existence analysis in \cite{burg} and does not present a restriction when dealing with grayvalue images whose values are always bounded within a fixed interval.

If the function $h$ in \eqref{4thpde} is the identity function $h(u)=u$ we obtain the following equation
\begin{equation} \label{TVWassflow}
\begin{aligned}
& u_t=\nabla\cdot(u\nabla q), \quad q\in\partial \mathcal E(u),& & \textrm{ in } \Omega\times (0,\infty),\\
& u(0,x)=u_0(x)\geq 0\quad& &\text{in }\Omega.
\end{aligned}
\end{equation}
This equation can be formally derived as the $L^2$-Wasserstein gradient flow of the total variation $\mathcal E$ in \eqref{TVfunct} and
has been proposed in \cite{burg} for density estimation and smoothing. Equation \eqref{TVWassflow} has been further investigated in \cite{dur}, where the authors numerically studied the scale space properties and high-contrasting effects of the equation by exploiting a numerical scheme similar to the ones we consider in this paper. In \cite{BCDS} the authors consider such equation for the problem of denoising, solving an alternative primal-dual formulation of \eqref{TVWassflow} with a Newton-type method.


From a computational point of view, finding numerical schemes that solve equations of the form \eqref{4thpde} is a challenging problem. Dealing with an evolutionary nonlinear fourth-order partial differential equation, we would like to find an efficient and easily applicable method. Note that a naive explicit discretisation in time may restrict the choice of the time-step size $\Delta t$ up to an order $\Delta x^4$, where $\Delta x$ denotes the step size of the spatial grid, compare \cite{smereka}. Moreover, the strong nonlinearity of subgradients of the total variation adds additionally constraints to the stability condition of a discrete time stepping scheme, compare \cite{ChMu,BuDiWei}. In particular, when approximating the subgradient of the total variation by regularising it -- either by a square root $\varepsilon$-regularisation or by a regularisation of its dual formulation --  the size of the regularisation parameter encodes the accuracy of this approximation, that is the strength of the nonlinearity in the approximated subgradient. The presence of this nonlinearity together with the fourth differential order of the equation then may result in restrictive stability conditions on numerical time stepping for small values of this regularisation parameter.\\
Having these complications in mind, some numerical methods have been proposed to solve equations like \eqref{tvH-1}. Lieu and Vese \cite{LV} proposed a numerical method to solve TV-$H^{-1}$ denoising/decomposition by using the Fourier representation of the $H^{-1}$ norm on the whole $\mathbb{R}^d$, $d\geq 1$. This leads to a second-order PDE defined in Fourier space. In \cite{elliott1} and \cite{elliott2} the authors propose an algorithm using a finite element method to solve such equation, while in \cite{AuCh,schon2} a dual approach similar to the one described in \cite{chamb} is presented with interesting applications both to denoising and inpainting. In \cite{schon} the authors present results of convergence and unconditional stability for a particular numerical splitting method solving equation \eqref{tvH-1}, called \emph{convexity splitting}. Therein, the equation is modeled as the gradient flow in $H^{-1}$ of the difference of two convex energies. The result is the presence of a linear diffusion term in the numerical scheme which balances the unstable behaviours coming from the nonlinear terms.




In this work we are interested in performing a \emph{directional\/}  operator splitting for the numerical solution of equation \eqref{tvH-1}, i.e.\  \eqref{tvh-1inp} for image inpainting. In particular, we consider \emph{alternating directional splitting (ADI)} methods that reach back to \cite{peaceman1} and have been further developed in, e.g, \cite{hunds1,hunds,barash,hout1,hout2,wit}. We discuss three different splitting methods and their stability properties. Our investigation culminates in the proposal of a fully-implicit, additive multiplicative operator splitting scheme for a slightly modified form of equation \eqref{tvH-1}. In our numerical experiments we obtain stable numerical solutions for arbitrary values of the regularization parameter, suggesting unconditional stability of the method. This allows to use fewer iterations (larger time steps) while each iteration is still computationally cheap due to the directional splitting. \\
We want to stress, that in the existing literature so far, most methods have been considered for the solution of second-order linear and nonlinear PDEs. Up to our knowledge the following discussion is the first one that carries out an ADI splitting for a higher-order total variation flow, which is both highly nonlinear and of fourth differential order.


\smallskip

\textbf{Outline of the paper.}
After the preliminary Section \ref{sec:prel}, in Section~\ref{sec:ADI} we will review the general features
of the main ADI methods. We will apply them to equation \eqref{tvH-1} in
Section~\ref{sec:ADIappl}. In order to come up with new and efficient
ADI methods solving numerically \eqref{tvH-1}, we exploit at first
(see Section~\ref{sec:bih}) an auxiliary, linear, fourth-order
equation. We next present in Section~\ref{sec:prim} our ideas applied
to the nonlinear cases we are interested on. Our strategy is as
follows: after linearising the equation in a suitable way, we expand
and discretise the appearing differential operators and we
build up the ADI schemes where we alternatively apply differential
operators acting just along either the $x$- or the $y$-direction.
According to the different choices of linearisation
performed, some mixed derivative terms may appear as well. They need
to be controlled properly in order to get stable results. Stability
properties appear to be related also with the regularisation of the
term $|\nabla u|$ appearing in both equations. In Sections
\ref{sec:lin1} and \ref{sec:lin2} we will give more details on that,
presenting in Section~\ref{sec:AMOS} a modified stable ADI scheme
dealing implicitly just with pure
derivatives. Section~\ref{sec:primdual} presents a quite different
approach to solve the so-called \emph{primal-dual} formulation of the TV-$H^{-1}$ problem.
In the concluding Section~\ref{sec:numres} we present some numerical results obtained
by applying the ADI methods above to some problems arising in image processing.

\section{Preliminaries} \label{sec:prel}

In this section we briefly introduce the main notation employed in the paper and the definition of the differential discrete operators we will use in the following to describe the numerical schemes. Dealing with a fourth-order nonlinear PDE, we also need to make precise what we mean exactly by \emph{stability} of a numerical scheme.

\subsection{Notation}

In this paper we discuss the numerical solution of evolutionary differential equations. We need to distinguish between the exact solution $u$ of the continuous equation and the approximate semi-discrete and fully discrete solutions of the numerical schemes. Let $u(x,y,t)$ the exact solution at point $(x,y)\in\Omega=[a,b]\times[c,d]$ and $t\geq 0$. We write $u_n(x,y)$ to indicate the approximation in time of the function $u(x,y,t_n)$, where $(x,y)\in\Omega,\ t_n=n\Delta t,\ n\geq 0$ and $\Delta t$ is the time step size. Further, we approximate $\Omega$  by a finite grid $\{a=x_1<\ldots<x_N=b\}\times\{c=y_1<\ldots<y_M=d\}$ with equidistant spatial size $h=(b-a)/N=(d-c)/M$. We then denote by $U(t)$ the approximation of $u(x_i,y_j,t)$ in the node $(x_i,y_j)$ at time $t$. Finally, the fully discrete approximation of $u$ is denoted by $U=(U_n)_{i,j}$. When dealing with vectors, we will indicate their components using the superscripts notation: $\textbf{Y}=\left( Y^1\, Y^2\right)^{\top}$. Finally, we will indicate by $\delta_{*}$ the differential operator acting in the direction $*$ and we will use the notations $D_{\nabla}$, $D_{\hbox{div}}$ and $D_{\Delta}U$ to indicate the discrete gradient, divergence and laplacian of the approximating solution $U$, respectively.

\subsection{Definition of the discrete operators}
In the following we define all the discrete operators that we use for approximating the spatial derivatives that appear in our numerical schemes. We use periodic boundary conditions throughout. We also performed the discretisation changing the boundary conditions to homogeneous Neumann boundary conditions, but this did not affect the numerical results significantly. 

We approximate the first derivatives $u_x(x_i,y_j,\cdot)$ by forward differences $(\delta_{x}^+ U)_{i,j}$ and backward differences $(\delta_{x}^- U)_{i,j}$, where:
\begin{equation*}
(\delta_{x}^+ U)_{i,j}=\frac{u_{i+1,j}-u_{i,j}}{h}
   \qquad\hbox{and }\qquad
(\delta_{x}^- U)_{i,j}=\frac{u_{i,j}-u_{i-1,j}}{h}
\end{equation*}
The first derivatives of $u$ with respect to $y$ are approximated analogously by $(\delta_{y}^+ U)_{i,j}$ and by $(\delta_{y}^- U)_{i,j}$. The pure second derivatives will be approximated by using the five-point formula, this means the Laplacian operator $\Delta u=u_{xx}+u_{yy}$ is approximated by:
\begin{equation*}
(D_{\Delta} U)_{i,j}=(\delta_{xx} U)_{i,j}+(\delta_{yy} U)_{i,j}=\frac{u_{i+1,j}+u_{i-1,j}+u_{i,j+1}+u_{i,j-1}-4u_{i,j}}{h^2}.
\end{equation*}
The mixed derivatives $u_{xy}(x_i,y_j,\cdot)$ are approximated by:
\begin{equation*}
(\delta_{xy} U)_{i,j}=\frac{u_{i+1,j+1}+u_{i-1,j-1}-u_{i-1,j+1}+u_{i+1,j-1}}{4h^2}.
\end{equation*}
For the discretisation of the pure and mixed fourth-order derivatives appearing in Section \ref{sec:bih}, we will use a $5 \times 5$ stencil and approximate by:
\begin{align}
(\delta_{xxxx}U)_{i,j}=(\delta_{xx}(\delta_{xx}U))_{i,j}&=\frac{u_{i+2,j}-4u_{i+1,j}+6u_{i,j}-4u_{i-1,j}+u_{i-2,j}}{h^4}, \notag \\
(\delta_{yyyy}U)_{i,j}=(\delta_{yy}(\delta_{yy}U))_{i,j}&=\frac{u_{i,j+2}-4u_{i,j+1}+6u_{i,j}-4u_{i,j-1}+u_{i,j-2}}{h^4}, \notag \\
(\delta_{xxyy}U)_{i,j}=(\delta_{xy}(\delta_{xy}U))_{i,j}=&\frac{u_{i+2,j+2}+u_{i-2,j-2}+4u_{i,j}+u_{i-2,j+2}+u_{i+2,j-2}}{16h^4}\notag\\
&-\frac{u_{i,j+2}+u_{i+2,j}+u_{i-2,j}+u_{i,j-2}}{8h^4}.\notag
\end{align}

\subsection{Definition of stability}
In the following, we introduce the notion of \emph{stability} of the numerical schemes solving the nonlinear equations \eqref{tvH-1} and \eqref{tvh-1inp}:
\begin{definition} \label{def_stab}
Let $u$ be an element of a suitable function space $\mathcal{H}$ defined on $\Omega\times[0,T]$, with $\Omega\subset\R^2$ open and bounded and $T>0$. Let $F$ be a real valued function and $u_t=G(u,D^\alpha u)$ a partial differential equation with all spatial $D^\alpha$ with order $|\alpha|\leq 4$. A corresponding time stepping method
\begin{equation} \label{stability}
u_{n+1}=u_n+\Delta t G_n(u_n,u_{n+1},D^\alpha u_n, D^\alpha u_{n+1})
\end{equation}
where $G_n$ is a suitable approximation of $G$ in $u_n$ and $u_{n+1}$ is \textbf{unconditionally stable} if all the solutions of \eqref{stability} are bounded for all $\Delta t>0$ and all $n$ such that $n\Delta t\leq T$ and \textbf{stable} if, for a given $\Delta t>0$, the solutions of \eqref{stability} are bounded for all $n$ such that $n\Delta t\leq T$.
\end{definition}


\section{ADI splitting schemes}  \label{sec:ADI}
\setcounter{equation}{0}

In this section we introduce the numerical method of directional splitting. In particular, we discuss three types of directional splitting: the Peaceman-Rachford scheme, the Douglas-Hundsdorfer scheme, and an additive multiplicative operator splitting scheme.\\
In everything that follows we consider large systems of generic ordinary differential equations that arise from a
semi-discretisation of initial boundary value problems such as
\begin{equation} \label{ODEs}
\left\{\begin{aligned}
U'(t)&=F(t,U(t))\quad\text{for }t\geq 0, \\
U(0)&=U_0,
\end{aligned}\right.
\end{equation}
for a given function $F$. The explicit dependence on time of $F$ may not appear, thus considering \emph{autonomous} systems. Typically, directional splitting methods (see, for instance, \cite{hunds}) decompose the function $F$ into the sum:
\begin{equation} \label{decomp}
F(t,U)=F_0(t,U)+F_1(t,U)+\dots+F_s(t,U)\quad\text{for some }s\geq 1,
\end{equation}
where $s$ is the spatial dimension of the problem. The components $F_j$, $j=1,\ldots, s,$ encode the linear action of $F$ along the space direction $j=1,\ldots, s,$ respectively, and $F_0$ contains contributes coming from mixed directions and/or non stiff nonlinear terms. A generic ADI scheme constitutes a time-stepping method that treats the unidirectional components $F_j, j\geq 1$ implicitly and the $F_0$ component, if present, explicitly in time.

\subsection{The Peaceman-Rachford scheme}

The first method we consider in this framework is the second-order \emph{Peaceman-Rachford} ADI method (see \cite{peaceman1} and \cite{hunds}) where the operator $F$ can be splitted into $F=F_1+F_2$, i.e. no mixed derivative or nonlinear terms are present. Let $\Delta t>0$ be the time step size of the scheme, then for every $n\geq 1$ the numerical solution $U_{n+1}$ of \eqref{ODEs} is computed as follows:
\begin{equation}  \label{peacemanrach}
\left\{\begin{aligned}
& U_{n+1/2}=U_n+\frac{\Delta t}{2}F_1(t_n,U_n)+\frac{\Delta t}{2}F_2(t_{n+1/2},U_{n+1/2}), \\
& U_{n+1}=U_{n+1/2}+\frac{\Delta t}{2} F_1(t_{n+1},U_{n+1})+\frac{\Delta t}{2}F_2(t_{n+1/2},U_{n+1/2}).
\end{aligned}\right.
\end{equation}
In \eqref{peacemanrach} we can see that forward and backward Euler are applied alternatingly in a symmetrical fashion, thus obtaining second-order accuracy (see \cite{hunds}). In each half time step, one of the two dimensions is treated implicitly, whereas the other one is explicitly considered. Note that \eqref{peacemanrach} can be generalised to problems with nonlinearities $f(U)$ and
$$
U'(t)=F(t,U(t)) + f(U(t)),
$$
by adding $\frac{1}{2} f(U_n)$ and $\frac{1}{2} f(U_{n+1/2})$ to the
right hand side of the first and second equation in \eqref{peacemanrach}
respectively. The scheme
\eqref{peacemanrach}, however, does not have a natural extension for
more than two components $F_j$, so more general ADI methods have been
proposed in the literature.

\subsection{The Douglas-Hundsdorfer scheme} \label{subsec_HUNDS}

Another ADI method that allows a more general decomposition like the one in \eqref{decomp} is the so-called \emph{Douglas\/} method (see \cite{houwen}, \cite{hunds} and \cite{hunds1}). In that, the numerical approximation in each time step is computed by applying at first a forward Euler predictor and then it is stabilised by fractional $s$ steps where just the unidirectional components $F_j$ appear, weighted by a parameter $\theta\in[0,1]$. Its size controls the implicit/explicit character of such steps. In other words, the unidirectional operators are applied to the convex combination $\theta U_{n+1}+(1-\theta)U_n$, thus considering fully implicit steps for $\theta=1$, explicit ones for $\theta=0$ and a Crank-Nicolson type scheme when $\theta=1/2$. The consistency order in time of the scheme is equal to two whenever $F_0=0$ and $\theta=1/2$ and it is of order one otherwise. In many applications we need to consider, however, $F_0\neq 0$ (for instance, if we are considering contributions coming from mixed derivative operators) for any given $\theta$. Some extensions of the Douglas scheme have been proposed in order to overcome this drawback. The following scheme proposed in \cite{hunds1} by Hundsdorfer is an extension of the \emph{Douglas} method where a second stabilising parameter $\sigma>0$ appears:
\begin{equation} \label{ADIgen}
\left\{ \begin{aligned}
& Y_0=U_n+\Delta tF(t_n,U_n) \\
& Y_j=Y_{j-1}+\theta\Delta t(F_j(t_{n+1},Y_j)-F_j(t_n,U_n)),\quad j=1,2,\dots,s \\
& \tilde{Y}_0=Y_0+\sigma\Delta t(F(t_{n+1},Y_s)-F(t_n,U_n)) \\
& \tilde{Y}_j=\tilde{Y}_{j-1}+\theta\Delta t(F_j(t_{n+1},\tilde{Y}_j)-F_j(t_{n+1},Y_s)),\quad j=1,2,\dots,s \\
& U_{n+1}=\tilde{Y}_s.
\end{aligned}\right.
\end{equation}
The advantage of this extension is that for any given $\theta$ the
scheme \eqref{ADIgen} has time-consistency order two if $\sigma=1/2$ and one
otherwise, independently of $F_0$. The parameter $\theta$ is typically fixed to $\theta=1/2$. Its choice is discussed in \cite{hunds1}. Larger values of $\theta$ give stronger damping of implicit terms, lower values typically better accuracy.  Stability properties of this scheme when applied to
linear convection-diffusion equations with mixed derivative terms have
been investigated in \cite{hout1, hout2}. There, the preferable value for $\theta$ is $\theta=1/2+\sqrt{3}/6$. In \cite{wit} the authors combine the approach presented above with iterative methods for solving nonlinear systems.

\subsection{Additive multiplicative operator splitting schemes}

As pointed out above, in both schemes \eqref{peacemanrach} and \eqref{ADIgen} some explicit terms appear. These may affect stability properties of the methods and, generally, their accuracy. As described in \cite{barash}, splitting schemes as directional splitting methods belong to the family of \emph{multiplicative locally one-dimensional\/} (LOD) schemes. In their general semi-implicit form, when a splitting similar to \eqref{decomp} holds, they appear as:
\begin{equation} \label{multsplit}
\prod_{i=0}^s(I-\Delta t F_i)U_{n+1}=U_n,
\end{equation}
where, similarly as before, each operator $F_i,$ $1\leq i\leq s$, is acting just along the
$i$-th direction,  whereas $F_0$ encodes mixed contributions. As
pointed out above, it is generally difficult to deal with such an operator as the matrix $(I-\Delta t F_0)$ is,
generally, not tridiagonal. For this reason, the scheme \eqref{ADIgen}
deals explicitly with such a term. Analogously, explicit components
appear also when applying \eqref{peacemanrach} because of the
alternating application of forward and backward Euler. As we will
point out later on, these explicit contributions may create stability
problems in the methods we are going to present. Following the
strategy presented in \cite{barash}, our attempt is to modify
\eqref{peacemanrach} such that no explicit contributions appear. The
cost of such an operation will be reducing the accuracy of the method
to order one, against the second-order achieved with the classical Peaceman-Rachford method \eqref{peacemanrach}. In order to preserve such accuracy as well as the symmetry of the method, at each time step two calculations are performed:

\begin{equation} \label{amos1}
\left\{\begin{aligned}
& (I-\Delta t F_1)U_{n^*}=U_n  \\
& (I-\Delta t F_2)\tilde{U}_{n+1}=U_{n^*}
\end{aligned}\right.
\quad\mbox{ and }\quad
\left\{
\begin{aligned}
& (I-\Delta t F_2)U_{n^\star}=U_n  \\
& (I-\Delta t F_1)\bar{U}_{n+1}=U_{n^\star}
\end{aligned}\right.
\end{equation}
which, written in the same form as \eqref{peacemanrach} and \eqref{ADIgen}, read as:
\begin{equation*}
\left\{\begin{aligned}
& U_{n^*}=U_n+\Delta t F_1(U_{n^*})  \\
& \tilde{U}_{n+1}=U_{n^*}+\Delta F_2(\tilde{U}_{n+1})
\end{aligned}\right.
\quad\mbox{ and }\quad
\left\{
\begin{aligned}
& U_{n^\star}=U_n + \Delta t F_2(U_{n^\star}) \\
& \bar{U}_{n+1}=U_n+ \Delta t F_1(\bar{U}_{n+1})
\end{aligned}\right. .
\end{equation*}
To get the numerical solution $U_{n+1}$ we simply average:
\begin{equation} \label{amos2}
U_{n+1}=\frac{\tilde{U}_{n+1}+\bar{U}_{n+1}}{2},
\end{equation}
thus ensuring a symmetric splitting. Due to the nature of such a method, we refer to \eqref{amos1}-\eqref{amos2} as \emph{additive multiplicative operator splitting} (AMOS) ADI method. Note, that this scheme is identical to an earlier version of the  well-known  \emph{Strang} splitting (see, for instance, \cite[(1.12), p.329]{hunds}). For nonlinear problems, such a scheme is second-order accurate, in contrast to first-order accuracy of the classical \emph{Strang} splitting scheme (compare \cite{barash}). Furthermore, the scheme \eqref{amos1}-\eqref{amos2} has the advantage of allowing a parallel implementation, as suggested in \cite{LNT1,LNT2,Weickert}.

\section{Applications to higher-order PDEs} \label{sec:ADIappl}
\setcounter{equation}{0}

As we are dealing with the particular case of regular domains in
$\R^2$, we will consider in the following $s=2$. Thus, the $F_1$
component ($F_2$, respectively) will contain operators acting just
along the $x$-direction ($y$-direction, respectively). When appearing,
the term $F_0$ will deal, instead, with the mixed $xy$-direction. Our
aim is to adapt the ADI schemes \eqref{peacemanrach}, \eqref{ADIgen}
and \eqref{amos1}-\eqref{amos2} to regularised versions of
equation \eqref{tvH-1} in Sections \ref{sec:prim} and
\ref{sec:primdual}.

\subsection{A linear example: the biharmonic equation}  \label{sec:bih}
We start by considering the auxiliary linear, fourth-order parabolic \emph{biharmonic} equation:
 \begin{equation} \label{biharmcont}
 u_t=-\Delta^2u\quad\text{in }\Omega\times (0,T).
 \end{equation}
Such equation appears in many applied mathematical models such as the Cahn-Hilliard models describing phase transitions and separation in binary mixtures (see, for instance, \cite{elliott} and \cite{novick}). Some work on ADI schemes applied to the biharmonic equation already exists in literature. It dates back to \cite{conte} where the authors consider the equation to model vibrational modes for thin plates and it has been analysed also in \cite{wit,hout1} where linear stability results are proved as well. We are looking for the solution $U$ of the following semi-discretised version of \eqref{biharmcont}:
\begin{equation}\label{biharm}
U_t=F(U):=-(D_{\Delta^2}) U=-\delta_{xxxx}U-\delta_{yyyy}U-2\delta_{xxyy}U.
\end{equation}
According to the rules of ADI schemes, we decompose the function $F$ into the sum
\begin{equation*}
F(U)=F_0(U)+F_1(U)+F_2(U)m
\end{equation*}
where the components $F_i,$ $i=0, 1, 2,$ are defined by
\begin{equation*}
F_0(u):=-2\delta_{xxyy}U,\quad F_1(u):=-\delta_{xxxx}U,\quad F_2(u):=-\delta_{yyyy}U,
\end{equation*}
and the differential operators have been discretised as discussed in Section \ref{sec:prel}. With this choice, we can find for every $n$ the approximating solution $U_{n+1}$ of \eqref{biharmcont} using the \emph{Hundsdorfer} ADI scheme \eqref{ADIgen}.

Simplifying the problem by
splitting the fourth-order equation into a mathematically equivalent autonomous system of two
partial differential equations of order two produces the system:
\begin{equation} \label{biharm1}
\left\{\begin{aligned}
    & U_t=D_{\Delta} V=\delta_{xx}V+\delta_{yy}V=F(U,V), \\
    & V=-D_{\Delta} U=-\delta_{xx}U-\delta_{yy}U=G(U,V).
    \end{aligned}\right.
\end{equation}
Then, the \emph{Hundsdorfer} scheme applied to \eqref{biharm} can be equivalently written as a coupled ADI scheme for approximate solutions $(U_n,V_n)$ of \eqref{biharm1}. For positive parameters $\theta, \sigma$ this gives:
\begin{equation}  \label{ADI1}
\left\{ \begin{aligned}
& \left(\begin{array}{c} Y^2_0 \\ Y^1_0 \end{array}\right)=\left(\begin{array}{c}G(U_n,V_n) \\ U_n+\Delta t F(U_n,Y^2_0)\end{array} \right), \\
& \left(\begin{array}{c} Y^1_1 \\ Y^2_1 \end{array}\right)=\left(\begin{array}{c} Y^1_0 \\  0 \end{array}\right)+\left(\begin{array}{c} \theta\Delta t F_1(Y^1_1,Y^2_1) \\ G_1(Y^1_1,Y^2_1)-G_1(U_n,V_n)\end{array} \right), \\
& \left(\begin{array}{c} Y^1_2 \\ Y^2_2 \end{array}\right)= \left(\begin{array}{c} Y^1_1 \\  0 \end{array}\right)+ \left(\begin{array}{c} \theta\Delta t F_2(Y^1_2,Y^2_2) \\ G_2(Y^1_2,Y^2_2)-G_2(U_n,V_n)\end{array} \right),  \\
& \left(\begin{array}{c} \tilde Y^2_0 \\ \tilde Y^1_0 \end{array}\right)=\left(\begin{array}{c} G(Y^1_2,Y^2_2) \\
Y^1_0+\sigma\Delta t(F(Y^1_2,\tilde Y^2_0)-F(U_n,V_n)) \end{array}\right), \\
& \left(\begin{array}{c} \tilde Y^1_1 \\ \tilde Y^2_1 \end{array}\right)=\left(\begin{array}{c} \tilde Y^1_0 \\  0 \end{array}\right)+\left(\begin{array}{c} \theta\Delta t F_1(\tilde Y^1_1, \tilde Y^2_1) \\ G_1(\tilde Y^1_1,\tilde Y^2_1)-G_1(Y^1_2,Y^2_2)\end{array} \right),\\
& \left(\begin{array}{c} U_{n+1} \\ V_{n+1} \end{array}\right)=\left(\begin{array}{c} \tilde Y^1_1 \\ 0 \end{array}\right)+\left(\begin{array}{c} \theta\Delta t F_2(\tilde Y^1_2, \tilde Y^2_2) \\ G_2(\tilde Y^1_2,\tilde Y^2_2)-G_2(Y^1_2,Y^2_2)\end{array} \right)
\end{aligned}\right.
\end{equation}
where the functions $F, F_1, F_2$ and $G,G_1,G_2$ are given by:
\begin{align}
& \left(\begin{array}{c} F_1(U,V) \\ G_1(U,V) \end{array}\right)=\left(\begin{array}{cc} A_1 & B_1 \\ C_1 & D_1 \end{array} \right)\cdot \left(\begin{array}{c} U \\ V \end{array}\right)=
\left(\begin{array}{cc} 0 & \delta_{xx} \\ -\delta_{xx} & 0 \end{array}\right)\cdot \left(\begin{array}{c} U \\ V \end{array}\right), \notag \\
&\left(\begin{array}{c} F_2(U,V) \\ F_2(U,V) \end{array}\right)=\left(\begin{array}{cc} A_2 & B_2 \\ C_2 & D_2 \end{array} \right)\cdot \left(\begin{array}{c} U \\ V \end{array}\right)=
\left(\begin{array}{cc} 0 & \delta_{yy} \\ -\delta_{yy} & 0 \end{array}\right)\cdot \left(\begin{array}{c} U \\ V \end{array}\right) \label{ADIbiharm}, \\
& F(U,V)=F_1(U,V)+F_2(U,V), \notag \quad G(U,V)=G_1(U,V)+G_2(U,V). \notag
\end{align}

To verify that \eqref{ADI1} gives the same solution as the \emph{Hundsdorfer} scheme applied to \eqref{biharm} we consider as example
the first implicit step in \eqref{ADI1} providing the approximations $(Y^1_1,Y^2_1)$. For the approximation $Y^1_1$ of $U_{n+1}$ we have:
\begin{equation*}
   Y^1_1=Y^1_0+\theta\Delta tF_1(Y^1_1,Y^2_1)=Y^1_0+\theta\Delta t\delta_{xx}(Y^2_1).
\end{equation*}
Using now the expression of $Y^2_1$ given by the implicit step relating to the equation for $V$ we have:
\begin{equation*}
 Y^1_1=Y^1_0+\theta\Delta t(\delta_{xx}(-\delta_{xx}(Y^1_1)+\delta_{xx}(U_n)))=Y^1_0+\theta\Delta t(-\delta_{xxxx}(Y^1_1)+\delta_{xxxx}(U_n)),
\end{equation*}
which, compared to the respective step performed applying the \emph{Hundsdorfer} scheme \eqref{ADIgen} to equation \eqref{biharm}, gives exactly the same result. We perform the same technique for the other implicit steps. Moreover, as the reader may note, in both the explicit steps of the scheme above we swapped the order of application of the method for consistency issues. Namely, we first find consistent approximations for $V_{n+1}$ using them to get consistent approximations of $U_{n+1}$. In the application of both these methods we get stable solutions both for $\Delta t=C(\Delta x)^3$ and also for $\Delta t=C(\Delta x)^2$, improving largely upon the condition $\Delta t  = C(\Delta x)^4$ that a naive explicit discretisation would give. Scheme \eqref{ADI1} is indeed \emph{unconditionally} stable, as proved earlier in \cite{hout1} and confirmed in our numerical experiments in Section \ref{sec:numres}. However, note that considering bigger time steps such as $\Delta t=C(\Delta x)$ the numerical accuracy suffers not producing sensible solutions.

\subsection{Applications to the primal formulation of TV-$H^{-1}$ equation}  \label{sec:prim}

We now aim to derive an ADI method solving the TV-$H^{-1}$ equation \eqref{tvH-1}. Expanding directly the differential operators appearing in the equation 
generates an intractable number of nonlinear terms of various differential orders. This makes a direct application of the ADI scheme to \eqref{tvH-1} impractical. Therefore, following the ideas presented in Section ~\ref{sec:bih}, we reduce the original fourth-order equation to an autonomous system of two second-order equations. In the following we consider the \emph{primal} formulation of \eqref{tvH-1}, in contrast to the \emph{primal-dual} formulation presented in Section \ref{sec:primdual}. To do so, we first regularise the subgradient of the total variation in \eqref{tvH-1} characterised by \eqref{subdiffTVel}. We use the standard \emph{square root $\varepsilon$-regularisation} that replaces $|\nabla u|$ by $|\nabla u|_\varepsilon:=\sqrt{|\nabla u|^2+\varepsilon}, 1\gg\varepsilon>0$, see for instance \cite{chan}. This results in the following regularised version of \eqref{tvH-1}
\begin{equation} \label{systemlin1}
u_t=-\Delta\nabla\cdot\left(\frac{\nabla u}{|\nabla u|_\varepsilon}\right)\quad\text{ and, equivalently, }\quad\left\{\begin{aligned}
& u_t=\Delta v, \\
& v=-\nabla\cdot\left(\frac{\nabla u}{|\nabla u|}_\varepsilon\right).
\end{aligned}\right.
\end{equation}
 In the following we present two different linearisations of the
 problem above. Heuristically, such a choice is important
 from two different points of view, intrinsically related to each
 other. The former is the accuracy of the scheme we are considering:
 rough linearisations (i.e.\ linearisations which consider most of the
 nonlinear terms explicitly evaluating them in one or more given
 approximations of the solution in previous time steps) are likely to
 present poor accuracy as well as stability issues. This is a
 general consideration in the numerical solution of every partial
 differential equation and it has to be taken into account and
 balanced with the choice of linearisation which might be more
 accurate and precise, but which could present, on the other hand,
 difficulties in its implementation and application. The latter
 point of view is, in some sense, peculiar to our choice of performing
 a directional splitting scheme. As pointed out above,
 our purpose is splitting our partial differential operator
 into the sum of components which are considered both explicitly (see
 $F_0$ above) and implicitly (see $F_1$ and $F_2$), as in \eqref{decomp}. As we are going to
 present in the following, the choice of the linearisation affects
 such a splitting as the $F_0$ component and the linearised quantities
 multiplying the differential operators acting in $x$ and $y$ may
 change accordingly. For instance, the $F_0$ component might not appear changing the choice of the ADI scheme we want to use.

\medskip

In the following we proceed by presenting two ADI schemes of the form
\eqref{peacemanrach} and \eqref{ADIgen} for two different
linearisations of \eqref{systemlin1}.
For a given initial condition $(U_0,V_0)$, our problem consists in
finding an approximation $(U_{n+1},V_{n+1})$ of the solution
$(u(t_{n+1}),v(t_{n+1}))$ to \eqref{systemlin1} for every $n\geq 0$. In the following we always linearise around the solution at the previous time step $(U_{n}, V_{n})$.

\subsubsection{The first linearisation} \label{sec:lin1}

Indicating by $\tilde U$ the value of the solution $U_n$ in the previous time step, our first choice of linearisation is the following (compare with \cite{dur}):
\begin{equation} \label{timestep1}
\left\{\begin{aligned}
& U_t=D_{\Delta} V, \\
& V=-\frac{\varepsilon+(\delta^+_y \tilde U)^2}{|D^+_{\nabla}\tilde U|^3_\varepsilon}\delta_{xx}U-\frac{\varepsilon+(\delta^+_x \tilde U)^2}{|D^+_{\nabla}\tilde U|^3_\varepsilon}\delta_{yy}U+2\frac{\delta^+_x \tilde U\delta^+_y \tilde U}{|D^+_{\nabla}\tilde U|^3_\varepsilon}\delta_{xy}U,
\end{aligned}\right.
\end{equation}
where $(U,V)$ is the semi-discrete approximation to a solution of \eqref{systemlin1}. Here, the linearisation of $V$ constitutes a semi-implicit approximation of the second-order nonlinear diffusion, evaluating all the first-order derivatives of the expansion in the previous time step.
As before, we use the following notation for the system \eqref{timestep1}:
\begin{equation}  \label{matrform}
\left(\begin{array}{c} U_t \\ V\end{array} \right)=\left(\begin{array}{c} F(U,V)\\ G(U,V)\end{array}\right) = \left(\begin{array}{cc} A & B\\ C & D\end{array}\right)\cdot \left(\begin{array}{c} U \\ V\end{array}\right)
\end{equation}
for suitable matrices $A,B,C$ and $D$ in $\R^{NM\times NM}$. We split $F$ and $G$ into the sum of three different terms: $F_0$ and $G_0$ containing the mixed derivative term and $F_1, G_1$ and $F_2, G_2$ containing the derivatives with respect to $x$ and $y$ only, respectively. This produces the splitting:
\begin{align}
& F(U,V)=F_0(U,V)+F_1(U,V)+F_2(U,V), \notag \\
& G(U,V)=G_0(U,V)+G_1(U,V)+G_2(U,V), \label{ADITV}
\end{align}
with:
\begin{align}
& \left(\begin{array}{c} F_0(U,V) \\ G_0(U,V) \end{array}\right)=\left(\begin{array}{cc} A_0 & B_0 \\ C_0 & D_0 \end{array} \right)\cdot \left(\begin{array}{c} U \\ V \end{array}\right)= \left(\begin{array}{cc} 0  & 0 \\ 2\frac{\delta^+_x\tilde U\delta^+_y\tilde U}{|D^+_{\nabla}\tilde U|^3_\varepsilon}\delta_{xy} & 0 \end{array}\right)\cdot \left(\begin{array}{c} U \\ V \end{array}\right), \notag \\
& \left(\begin{array}{c} F_1(U,V) \\ G_1(U,V) \end{array}\right)=\left(\begin{array}{cc} A_1 & B_1 \\ C_1 & D_1 \end{array} \right)\cdot \left(\begin{array}{c} U \\ V \end{array}\right)=
\left(\begin{array}{cc} 0 & \delta_{xx} \\ -\frac{\varepsilon+(\delta^+_y \tilde U)^2}{|D^+_{\nabla}\tilde U|^3_\varepsilon}\delta_{xx} & 0 \end{array}\right)\cdot \left(\begin{array}{c} U \\ V \end{array}\right), \label{ADITV2}\\
& \left(\begin{array}{c} F_2(U,V) \\ G_2(U,V) \end{array}\right)=\left(\begin{array}{cc} A_2 & B_2 \\ C_2 & D_2 \end{array} \right)\cdot \left(\begin{array}{c} U \\ V \end{array}\right)=
\left(\begin{array}{cc} 0 & \delta_{yy} \\ -\frac{\varepsilon+(\delta^+_x \tilde U)^2}{|D^+_{\nabla}\tilde U|^3_\varepsilon}\delta_{yy} & 0 \end{array}\right)\cdot \left(\begin{array}{c} U \\ V \end{array}\right). \notag
\end{align}
Due to the presence of a mixed derivative operator, a simple \emph{Peaceman-Rachford} scheme cannot be applied. The \emph{Hundsdorfer} scheme \eqref{ADIgen} is needed. The ADI scheme we use has the same form as for the biharmonic equation, i.e.\ we employ \eqref{ADI1} with $F, F_i$ and $G, G_i$ given by \eqref{ADITV}, \eqref{ADITV2}.

\subsubsection{The second linearisation} \label{sec:lin2}

Another possibility is to linearise the system \eqref{systemlin1} in the following way:
\begin{equation} \label{timestep2}
\left\{\begin{aligned}
U_t=&D_{\Delta} V, \\
 V=&-D^-_{\hbox{div}}\left(\frac{D^+_{\nabla} U}{|D^+_{\nabla} \tilde U|}_\varepsilon\right)=-\frac{1}{|D_{\nabla}^+ \tilde U|_\varepsilon}\delta_{xx}U+\frac{\delta_{x}^+ \tilde U\delta_{xx}\tilde U+\delta_{y}^+\tilde U\delta_{x}^-\delta^+_{y}\tilde U}{|D_{\nabla}^+\tilde U|^3_\varepsilon}\delta_{x}^{up}U \\
&\qquad \qquad \qquad \qquad \qquad \;-\frac{1}{|D_{\nabla}^+\tilde U|_\varepsilon}\delta_{yy}U+\frac{\delta_{x}^+\tilde U\delta_{x}^+\delta_{y}^-\tilde U+\delta_{y}^+\tilde U\delta_{yy}\tilde U}{|D_{\nabla}^+ \tilde U|^3_\varepsilon}\delta_{y}^{up} U,
\end{aligned}\right.
\end{equation}
where again $\tilde U=U_n$ and the spatial quantities are discretised as above and the discrete operators $\delta_{x}^{up}$ and $\delta_{y}^{up}$ are defined below. We observe that
with this choice no mixed derivative operator acting on $U$ appears. Mixed terms are encoded and considered in the previous time step. On the other hand, we get first derivative operators and not just second-order ones as in \eqref{timestep1}. Writing again the system \eqref{timestep2} as in \eqref{matrform}, we now split $F$ and $G$ in the following way:
\begin{align}
& F(U,V)=F_1(U,V)+F_2(U,V),  \label{splittinglin2}\\
& G(U,V)=G_1(U,V)+G_2(U,V), \notag
\end{align}
where
\begin{align}
& \left(\begin{array}{c} F_1(U,V) \\ G_1(U,V) \end{array}\right) = \left(\begin{array}{cc} 0 & \delta_{xx} \\
\frac{1}{|D_{\nabla}^{+} \tilde U|_\varepsilon}\delta_{xx}\frac{\delta_{x}^+ \tilde U\delta_{xx}\tilde U+\delta_{y}^+\tilde U\delta_{x}^-\delta_{y}^+\tilde U}{|D_{\nabla}^+ \tilde U|^3_\varepsilon}\delta^{up}_{x}& 0 \end{array}\right)\cdot \left(\begin{array}{c} U \\ V \end{array}\right), \label{operADIpeac} \\
& \left(\begin{array}{c} F_2(U,V) \\ G_2(U,V) \end{array}\right) = \left(\begin{array}{cc} 0 & \delta_{yy} \\
 -\frac{1}{|D_{\nabla}^+\tilde U|_\varepsilon}\delta_{yy}+\frac{\delta_{x}^+\tilde U\delta_{x}^+\delta_{y}^-\tilde U+\delta_{y}^+\tilde U\delta_{yy}\tilde U}{|D_{{\nabla}^+} \tilde U|^3_\varepsilon}\delta^{up}_{y}  & 0 \end{array}\right)\cdot \left(\begin{array}{c} U \\ V \end{array}\right). \notag
\end{align}
We note that the splitting \eqref{splittinglin2} is a two-components splitting as the one provided for the \emph{Peaceman-Rachford} ADI scheme  \eqref{peacemanrach} applied to the system \eqref{timestep2} and this gives:
\begin{equation} \label{ADIschpeaceman}
\left\{\begin{aligned}
& \left(\begin{array}{c} U_{n+1/2} \\ V_{n+1/2} \end{array}\right)=\left(\begin{array}{c} U_n+\frac{\Delta t}{2}F_1(U_n,V_n)+\frac{\Delta t}{2}F_2(U_{n+1/2},V_{n+1/2}) \\ G_1(U_n,V_n)+G_2(U_{n+1/2},V_{n+1/2})\end{array} \right), \\
& \left(\begin{array}{c} U_{n+1} \\ V_{n+1} \end{array}\right)=\left(\begin{array}{c} U_{n+1/2}+\frac{\Delta t}{2} F_1(U_{n+1},V_{n+1})+\frac{\Delta t}{2}F_2(U_{n+1/2},V_{n+1/2}) \\ G_1(U_{n+1},V_{n+1})+G_2(U_{n+1/2},V_{n+1/2})\end{array} \right).
\end{aligned}\right.
\end{equation}
For the discretisation of the first derivative operators in the scheme -- present in the equation for $V$ in \eqref{timestep2} -- we use the standard numerical technique of upwinding, i.e. the sign of the coefficients in front of the first derivatives terms affects in which direction the finite differences are computed. More precisely, we use:
\begin{align} \label{upwind}
C_1(\tilde U)\delta_{x}^{up}&=\mathbbm{1}_{\hbox{sign}(C_1>0)}\ \delta^-_{x} U+\mathbbm{1}_{\hbox{sign}(C_1<0)}\ \delta^+_{x} U, \\
C_2(\tilde U)\delta_{y}^{up} &=\mathbbm{1}_{\hbox{sign}(C_2>0)}\ \delta^-_{y} U+\mathbbm{1}_{\hbox{sign}(C_2<0)}\ \delta^+_{y} U \notag
\end{align}
where
\begin{equation*}
C_1(\tilde U)=\frac{\delta_{x}^+ \tilde U\delta_{xx}\tilde U+\delta_{y}^+\tilde U\delta_{x}^-\delta_{y}^+\tilde U}{|D^+_{\nabla} \tilde U|^3_\varepsilon},\qquad
C_2(\tilde U)=\frac{\delta_{x}^+\tilde U\delta_{x}^+\delta_{y}^-\tilde U+\delta_{y}^+\tilde U\delta_{yy}\tilde U}{|D^+_{\nabla} \tilde U|^3_\varepsilon},
\end{equation*}
and $\mathbbm{1}_{S}$ is the indicator function for the set $S$.
A numerical discussion pointing out the differences of the ADI schemes resulting from the two linearisations \eqref{timestep1} and \eqref{timestep2} follows in Section \ref{sec:numres}.

\subsubsection{Discussion of stability restrictions for the Hundsdorfer scheme}

As we are going to illustrate numerically in Section~\ref{sec:numres}, a stable application of the ADI schemes to equation \eqref{tvH-1} depends on the choice of the regularising parameter $\varepsilon$. In order to use reasonably large time steps $\Delta t$, this parameter has to be taken sufficiently large to get stable results for the numerical solution of \eqref{tvH-1}. For the following stability consideration we use the terminology introduced in Definition \ref{def_stab} where we consider the solution continuous in space and discrete in time.

Fully explicit numerical schemes solving TV gradient flows turn out to show restrictive stability conditions related to the strength of the nonlinearity in the TV subgradient, cf. \cite{ChMu,BuDiWei}.
On the other hand, fully implicit schemes solving \eqref{timestep2} without any operator splitting are unconditionally stable. In particular, we have the following stability theorem:
\begin{theorem}
Let $u_0$ be a sufficiently regular initial condition and $u_n$ the solution of
\begin{equation}\label{TVH-1implicit}
u_{n+1}=u_n-\Delta t\Delta\nabla\cdot\left(\frac{\nabla u_{n+1}}{|\nabla u_n|_\varepsilon}\right).
\end{equation}
Then, the following stability estimate holds
\begin{equation} \label{stabeveryeps}
\norm{\nabla u_{n+1}}_\varepsilon\leq \norm{\nabla u_0}_\varepsilon,
\end{equation}
where $\norm{w}_\varepsilon=\left(\int_\Omega (w^2+\varepsilon)\right)^{1/2}$ .
\end{theorem}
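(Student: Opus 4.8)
The plan is to reproduce, at the discrete level, the energy dissipation of the continuous $H^{-1}$ gradient flow by testing the implicit scheme against its natural dual variable. First I would rewrite \eqref{TVH-1implicit} in the equivalent two-field form $u_{n+1}-u_n=\Delta t\,\Delta v_{n+1}$, where $v_{n+1}:=-\nabla\cdot\bigl(\nabla u_{n+1}/\abs{\nabla u_n}_\varepsilon\bigr)$. Multiplying this identity by $v_{n+1}$ and integrating over $\Omega$, the periodic boundary conditions let me integrate by parts freely: the right-hand side collapses to $\Delta t\int_\Omega(\Delta v_{n+1})v_{n+1}=-\Delta t\int_\Omega\abs{\nabla v_{n+1}}^2\le 0$, while the left-hand side, after one integration by parts against the divergence in $v_{n+1}$, becomes the weighted Dirichlet-type quantity with weight $1/\abs{\nabla u_n}_\varepsilon$. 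This produces the fundamental inequality
\[
\int_\Omega \frac{\nabla u_{n+1}\cdot\nabla(u_{n+1}-u_n)}{\abs{\nabla u_n}_\varepsilon}\,dx \le 0 .
\]

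Next I would linearise the numerator by the elementary polarisation identity $a\cdot(a-b)=\tfrac12\bigl(\abs{a}^2-\abs{b}^2\bigr)+\tfrac12\abs{a-b}^2$, applied pointwise with $a=\nabla u_{n+1}$ and $b=\nabla u_n$. The term $\abs{\nabla(u_{n+1}-u_n)}^2/\abs{\nabla u_n}_\varepsilon$ is nonnegative and can simply be discarded, which leaves the weighted monotonicity estimate
\[
\int_\Omega \frac{\abs{\nabla u_{n+1}}^2-\abs{\nabla u_n}^2}{\abs{\nabla u_n}_\varepsilon}\,dx \le 0 .
\]
Since the additive constant $\varepsilon$ cancels in the numerator, $\abs{\nabla u_{n+1}}^2-\abs{\nabla u_n}^2=\abs{\nabla u_{n+1}}_\varepsilon^2-\abs{\nabla u_n}_\varepsilon^2$, and the inequality reads $\int_\Omega \abs{\nabla u_{n+1}}_\varepsilon^2/\abs{\nabla u_n}_\varepsilon\,dx\le\int_\Omega\abs{\nabla u_n}_\varepsilon\,dx$. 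A single Cauchy--Schwarz step in the weight $1/\abs{\nabla u_n}_\varepsilon$ then removes the weight and yields the clean one-step bound, which I would iterate from $n=0$ to close the argument and deliver \eqref{stabeveryeps}. Structurally this is exactly the discrete mirror of the continuous computation $\tfrac{d}{dt}\int_\Omega\abs{\nabla u}_\varepsilon=-\int_\Omega\abs{\nabla v}^2\le 0$, the semi-implicit lagging of $\abs{\nabla u_n}_\varepsilon$ in the denominator being precisely what makes the discrete identity inherit the convex-energy dissipation for every $\Delta t>0$.

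The step I expect to be the main obstacle is the passage from the weighted inequality to a bound in the stated norm $\norm{\cdot}_\varepsilon$. The object naturally controlled by the test against $v_{n+1}$ is intrinsically weighted by $1/\abs{\nabla u_n}_\varepsilon$, and a weighted inequality of the form $\int f^2/g\le\int g$ does \emph{not} by itself control the unweighted quantity $\int f^2$, since the weight can redistribute mass between regions where $\abs{\nabla u_n}_\varepsilon$ is large and small. The Cauchy--Schwarz device above is exactly what circumvents this, but one should check carefully which norm it delivers: it most directly gives monotonicity of the regularised total-variation energy $\int_\Omega\abs{\nabla u_n}_\varepsilon\,dx$, and some care is needed to match this to the precise definition of $\norm{\cdot}_\varepsilon$. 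I would therefore treat the identification of the controlled quantity with $\norm{\nabla u_{n+1}}_\varepsilon$, and the verification that the Cauchy--Schwarz estimate is tight enough to conclude, as the delicate core of the proof; everything else (the two integrations by parts, the polarisation identity, and the induction on $n$) is routine.
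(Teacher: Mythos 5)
Your proof is correct, and structurally it is the same argument as the paper's; the main difference is that you prove the key step which the paper outsources to a citation. The paper tests \eqref{TVH-1implicit} with $\Delta^{-1}(u_{n+1}-u_n)$, which by your two-field identity $u_{n+1}-u_n=\Delta t\,\Delta v_{n+1}$ is just $-\Delta t\,v_{n+1}$, so your multiplication by $v_{n+1}$ is the identical testing step up to a constant factor; both yield
\[
\frac{1}{\Delta t}\norm{u_{n+1}-u_n}_{-1}^2+\int_\Omega\frac{\nabla u_{n+1}\cdot\nabla(u_{n+1}-u_n)}{\abs{\nabla u_n}_\varepsilon}\,dx=0 .
\]
From here the paper invokes the result of \cite{elliott1}, namely the pointwise convexity inequality $\frac{a\cdot(a-b)}{\abs{b}_\varepsilon}\ge\abs{a}_\varepsilon-\abs{b}_\varepsilon$, integrates and telescopes. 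That pointwise inequality follows from exactly your polarisation step together with the observation that $\bigl(\abs{a}_\varepsilon-\abs{b}_\varepsilon\bigr)^2/\bigl(2\abs{b}_\varepsilon\bigr)\ge 0$, so your polarisation-plus-weighted-Cauchy--Schwarz chain is an honest elementary reproof of the cited lemma, in integrated rather than pointwise form. What the paper's route buys is a slightly stronger telescoped estimate retaining the dissipation term $\Delta t\sum_n\norm{\partial_t u_{n+1}}_{-1}^2$ on the left-hand side; what yours buys is self-containedness. Concerning the obstacle you flag at the end: your instinct is right that the argument controls the regularised total variation $\int_\Omega\abs{\nabla u}_\varepsilon\,dx$ (integral of the pointwise square root) rather than the literal $\bigl(\int_\Omega(\abs{\nabla u}^2+\varepsilon)\,dx\bigr)^{1/2}$, and indeed the weighted inequality $\int f^2/g\le\int g$ cannot be upgraded to an unweighted $L^2$ bound without an a priori upper bound on $\abs{\nabla u_n}_\varepsilon$. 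But this is a defect of the theorem's notation, not of your proof: the paper's own proof controls exactly the same quantity, since the Elliott--Smitheman energy is $\int_\Omega\sqrt{\abs{\nabla u}^2+\varepsilon}\,dx$, and the displayed definition of $\norm{\cdot}_\varepsilon$ in the statement is simply inconsistent with what is proved. Reading $\norm{\nabla u}_\varepsilon$ as $\int_\Omega\abs{\nabla u}_\varepsilon\,dx$, your argument closes completely and proves the same statement the paper does.
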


\begin{proof}
Multiplying equation \eqref{TVH-1implicit} by $\Delta^{-1}(u_{n+1}-u_n)$ (where $\Delta^{-1}$ is the inverse of the negative laplacian with zero boundary conditions) and integrating over $\Omega$ we get:
\begin{equation*}
\langle u_{n+1}-u_n,\Delta^{-1}(u_{n+1}-u_n)\rangle=\Delta t\langle\nabla\cdot\left(\frac{\nabla u_{n+1}}{|\nabla u_n|_\varepsilon}\right),u_{n+1}-u_n\rangle
\end{equation*}
where $\langle\cdot,\cdot\rangle$ denotes the $L^2$ inner product. We can rewrite the left hand side of the equation above using the properties of $\Delta^{-1}$ and applying the divergence theorem, thus finding:
\begin{equation*}
\langle \nabla\Delta^{-1}(u_{n+1}-u_n),\nabla\Delta^{-1}(u_{n+1}-u_n)\rangle+\Delta t\langle\left(\frac{\nabla u_{n+1}}{|\nabla u_n|_\varepsilon}\right),\nabla(u_{n+1}-u_n)\rangle=0.
\end{equation*}
We can now apply the result provided in \cite{elliott1} and summing over all $t_n=n\Delta t$ up to $T=N\Delta t$, finding the following stability estimate:
\begin{equation*}
\norm{\nabla u_{n+1}}_\varepsilon\leq \Delta t\sum_{n}\norm{\partial_t u_{n+1}}^2_{-1}+\norm{\nabla u_{N+1}}_\varepsilon\leq\norm{\nabla u_0}_\varepsilon,
\end{equation*}
where $\partial_t u_{n+1}=\frac{u_{n+1}-u_n}{\Delta t}$, which gives \eqref{stabeveryeps}. In particular, estimate \eqref{stabeveryeps} does not depend on the size of $\varepsilon$.
\end{proof}

These considerations about explicit and implicit schemes solving
directly (i.e. without any splitting) our problem \eqref{tvH-1} serve
as a motivation for the following estimates. We focus on the \emph{Hundsdorfer} scheme \eqref{ADI1} applied to the TV-$H^{-1}$ equation with the choice \eqref{ADITV}, \eqref{ADITV2}.
In each iteration the numerical solution is computed from equations consisting of a combination of explicit and implicit quantities. In particular, the explicit quantities might affect the stability properties of the scheme. To
motivate this, we focus in the following just on the first three
stages of the scheme \eqref{ADI1} applied to the TV-$H^{-1}$ equation with the choice \eqref{ADITV}, \eqref{ADITV2} and $\theta=1/2$. Considering  the first three stages of \eqref{ADI1} only can be justified by the fact that the subsequent three stages of the scheme have a similar structure and are not expected to change the stability properties drastically.

Combining the three steps of the scheme \eqref{ADI1} with \eqref{ADITV}, \eqref{ADITV2} we find the following expression:
\begin{align}
& \frac{u_{n+1}-u_n}{\Delta t} +\frac{1}{2}\partial_{xx}(C_1(u_n)\partial_{xx} u_{n+1})+\frac{1}{2}\partial_{yy}(C_2(u_n)\partial_{yy} u_{n+1}) \notag \\
& +\frac{\Delta
  t}{4}\partial_{xx}(C_1(u_n)\partial_{xx}(\partial_{yy}(C_2(u_n)\partial_{yy}
u_{n+1}))) \label{ADIschcompact}\\
=&-\Delta\nabla\cdot\left(\frac{\nabla u_n}{|\nabla u_n|_\varepsilon}\right)
 +\frac{1}{2}\partial_{xx}(C_1(u_n)\partial_{xx}
 u_{n})+\frac{1}{2}\partial_{yy}(C_2(u_n)\partial_{yy} u_{n})\notag \\
&+\frac{\Delta t}{4}\partial_{xx}(C_1(u_n)\partial_{xx}(\partial_{yy}(C_2(u_n)\partial_{yy} u_{n})))  \notag
\end{align}
where $\partial_x$ denotes the continuous derivative with respect to $x$ and the positive quantities $C_1(u_n)$ and $C_2(u_n)$ come from
the linearisation described in Section~\ref{sec:lin1}. They read:
\begin{equation*}
C_1(u_n)=\frac{\varepsilon+\partial_y(u_n)^2}{|\nabla u_n|^3_\varepsilon},\quad C_2(u_n)=\frac{\varepsilon+\partial_x(u_n)^2}{|\nabla u_n|^3_\varepsilon}.
\end{equation*}
We observe that a mixed, eighth-order operator appears, both on the
left and on the right hand side of \eqref{ADIschcompact}. In the following stability discussion we neglect these high-order terms which only represent second-order in time contributions.

 Now, we multiply equation \eqref{ADIschcompact} by $u_{n+1}$ and integrate over the domain $\Omega$. By applying integration by parts twice with respect to the $x$ and the $y$ variables to the second and the third terms of the left hand side of the equation, respectively, we get:
\begin{align} \label{est1}
&\frac{1}{2}\left(\int_\Omega \partial_{xx}(C_1(u_n)\partial_{xx} u_{n+1})u_{n+1}+\int_\Omega \partial_{yy}(C_2(u_n)\partial_{yy} u_{n+1})u_{n+1}\right) \notag\\
&=\frac{1}{2}\left(\int_\Omega C_1(u_n)(\partial_{xx} u_{n+1})^2+\int_\Omega C_2(u_n)(\partial_{yy} u_{n+1})^2\right) \\
& \geq \frac{1}{2}K(\varepsilon)(\norm{\partial_{xx}(u_{n+1})}^2+\norm{\partial_{yy}(u_{n+1})}^2). \notag
\end{align}
Here $K(\varepsilon)$ is a suitable constant that depends on the regularising parameter $\varepsilon$ only.
A similar strategy is applied to the analogous terms on the right hand side, where we have also used Young's inequality with weights $\delta_1$ and $\delta_2$. We obtain:
\begin{align}
&\frac{1}{2}\left(\int_\Omega C_1(u_n)\partial_{xx} u_n\partial_{xx}u_{n+1}+\int_\Omega C_2(u_n)\partial_{yy} u_n\partial_{yy}(u_{n+1})\right) \notag \\
&\leq \frac{1}{4\delta_1}\norm{C_1(u_n)\partial_{xx} u_{n+1}}^2+\frac{\delta_1}{4}\norm{\partial_{xx}(u_{n+1})}^2+\frac{1}{4\delta_2}\norm{C_2(u_n)\partial_{yy} u_{n+1}}^2+\frac{\delta_2}{4}\norm{\partial_{yy}(u_{n+1})}^2.\notag
\end{align}
The first term on the left hand side of \eqref{ADIschcompact} can be
dealt with by using Young's inequality again. It remains to consider the first, nonlinear, term on the right
hand side of \eqref{ADIschcompact}. By applying the divergence theorem, integration by parts and Young's inequality with weight $\delta_3$, we get for this term the following estimate:
\begin{align}  \label{estnonlin}
&-\int_{\Omega}\Delta\nabla \cdot\left(\frac{\nabla u_{n}}{|\nabla u_n|_\varepsilon}\right)u_{n+1}=\int_\Omega \nabla\nabla \cdot\left(\frac{\nabla u_{n}}{|\nabla u_n|_\varepsilon}\right)\nabla u_{n+1}\\
&=-\int_\Omega \nabla \cdot\left(\frac{\nabla u_{n}}{|\nabla u_n|_\varepsilon}\right)\Delta u_{n+1}\leq \frac{1}{2\delta_3}\norm{\nabla \cdot\left(\frac{\nabla u_{n}}{|\nabla u_n|_\varepsilon}\right)}^2+\frac{\delta_3}{2}\norm{\Delta u_{n+1}}^2. \notag
\end{align}
The second term on the right hand side of the inequality above can be merged with the corresponding ones in \eqref{est1}, choosing $\delta_1$ small enough. Denoting by $C_3(u_n)=\frac{\partial_x u_n\partial_y u_n}{|\nabla u_n|^3_\varepsilon}$, for the curvature term in \eqref{estnonlin} we observe that:
\begin{align*}
&\norm{\nabla \cdot\left(\frac{\nabla u_{n}}{|\nabla u_n|_\varepsilon}\right)}^2=\int_\Omega(C_1(u_n)\partial_{xx}(u_n)+C_2(u_n)\partial_{yy}(u_n)+C_3(u_n)\delta_{xy}(u_n))^2 \\
& \leq 2\left(2\left(\int_\Omega(C_1(u_n)\partial_xx(u_n))^2+\int_\Omega(C_2(u_n)\partial_{yy}(u_n))^2\right)+\int_\Omega(C_3(u_n)\delta_{xy}(u_n))^2\right) \\
& \leq 4\left(\frac{1}{\sqrt{\varepsilon}}+\frac{1}{|\partial_y(u_n)|}\right)^2\norm{\partial_{xx}(u_n)}^2+
4\left(\frac{1}{\sqrt{\varepsilon}}+\frac{1}{|\partial_x(u_n)|}\right)^2\norm{\partial_{yy}(u_n)}^2+\frac{1}{\sqrt{\varepsilon}}\norm{\delta_{xy}(u_n)}^2
\end{align*}
where we used Cauchy's inequality and upper bounds on $C_1, C_2$ and $C_3$.
Defining $K_1(\varepsilon)$,  $K_2(\varepsilon)$ and $K_3(\varepsilon)$ as
\begin{equation*}
K_1(\varepsilon):=4\left(\frac{1}{\sqrt{\varepsilon}}+\frac{1}{|\partial_y(u_n)|}\right)^2, \quad
K_2(\varepsilon):=4\left(\frac{1}{\sqrt{\varepsilon}}+\frac{1}{|\partial_x(u_n)|}\right)^2, \quad
K_3(\varepsilon):=\frac{1}{\sqrt{\varepsilon}}
\end{equation*}
and using an estimate proved in \cite{chen} for the mixed derivative term, we get the following bound
\begin{equation*}
\norm{\nabla \cdot\left(\frac{\nabla u_{n}}{|\nabla u_n|_\varepsilon}\right)}^2\leq K_1(\varepsilon)\norm{\partial_{xx}(u_n)}^2+ K_2(\varepsilon)\norm{\partial_{yy}(u_n)}^2+K_3(\varepsilon)\sup_{z\in\{x,y\}}\norm{\delta_{zz}(u_n)}^2.
\end{equation*}
Collecting the previous estimates, choosing $\delta_1, \delta_2$ and $\delta_3$ small enough and applying once more upper bounds on $C_1$ and $C_2$ we get the following stability estimate
\begin{align} \label{stabest}
& \frac{1}{2\Delta t}\norm{u_{n+1}}^2+\norm{\partial_{xx}(u_{n+1})}^2+\norm{\partial_{yy}(u_{n+1})}^2 \\
& \leq \frac{1}{2\Delta t}\norm{u_n}^2+\tilde{K_1}(\varepsilon)\norm{\partial_{xx}(u_n)}^2+\tilde{K_2}(\varepsilon)\norm{\partial_{yy}(u_n)}^2+
\tilde{K_3}(\varepsilon)\sup_{z\in\{x,y\}}\norm{\delta_{zz}(u_n)}^2 \notag
\end{align}
for scaled constants $\tilde{K_1}$, $\tilde{K_2}$ and
$\tilde{K_3}$ which tend to infinity as the regularising parameter $\varepsilon\searrow 0$. For this limit the estimate then blows up, thus indicating possible unstable behaviour when choosing $\varepsilon$ small. This will be discussed in more detail in Section \ref{sec:numres}.

\subsubsection{A stable AMOS ADI scheme} \label{sec:AMOS}

In order to counteract the dependence of the stability properties of the ADI schemes \eqref{ADI1} and
\eqref{ADIschpeaceman} on the size of $\varepsilon$, we now consider as an alternative the AMOS operator splitting scheme \eqref{amos1}-\eqref{amos2} for solving \eqref{systemlin1}. Due to the fully implicit character of the scheme, the hope is that stability properties improve.

To see this, both ADI methods \eqref{ADI1} and
\eqref{ADIschpeaceman} can be represented in a vectorial, multiplicative
form similar to \eqref{multsplit}. For \eqref{ADI1}, the operator $F_0$ appearing in
\eqref{multsplit} is taken explicitly in time. Thus, when writing the correspondent numerical
scheme in a multiplicative form, we have additional explicit terms on the right hand side.
Writing \eqref{ADIschpeaceman} in the form \eqref{multsplit} we again obtain additional explicit terms appearing due to the forward Euler steps in \eqref{ADIschpeaceman}. This, together with the stability estimates from the previous section, indicates possible stability restrictions for these schemes that will be confirmed by the numerical results in the following Section \ref{sec:numres}.

In the following we present an AMOS scheme for solving a slightly modified version of \eqref{tvH-1}. In system form, this new equation reads:
\begin{equation} \label{systemlin2}
\left\{\begin{aligned}
& u_t= v^1_{xx}+v^2_{yy}  \\
& v^1=-\partial_x\left(\frac{u_x}{|\nabla u|_\varepsilon}\right),\quad v^2=-\partial_y\left(\frac{u_y}{|\nabla u|_\varepsilon}\right).
\end{aligned}\right.
\end{equation}
This equation is more anisotropic than the original \eqref{tvH-1} in the sense that the nonlinear diffusion in $x$ and $y$ directions are considered separately. Only the diffusion weighting involves the whole image gradient taking both $x$ and $y$ variations into account. In this way, linearising $v_1$ and $v_2$ by considering the diffusion weighting $1/|\nabla \tilde u|_\varepsilon$ for a given $\tilde u$, results in an equation with only pure $x$ and $y$ derivatives. Considering such an equation reduces the explicit   components appearing in the scheme, allowing a fully implicit treatment of the operators, though still exploiting the advantage of directional splitting by solving along the two directions $x$ and $y$ separately.
Applying the AMOS scheme \eqref{amos1}-\eqref{amos2} to the linearisation of \eqref{systemlin2}, we obtain:
\begin{equation} \label{ADIAMOS}
\left\{\begin{aligned}
& \left(\begin{array}{c} U_{*} \\ V^2_{*} \end{array}\right)=\left(\begin{array}{c} U_n+\Delta tF_2(U_{*},V^2_{*}) \\ G_2(U_{*},V^2_{*})\end{array} \right), \\
& \left(\begin{array}{c} \tilde{U}_{n+1} \\ \tilde{V}^1_{n+1} \end{array}\right)=\left(\begin{array}{c} U_{*}+\Delta t F_1(\tilde{U}_{n+1},\tilde{V}^1_{n+1})\\ G_1(\tilde{U}_{n+1},\tilde{V}^1_{n+1})\end{array}\right) \\
& \left(\begin{array}{c} U_{\star} \\ V^1_{\star} \end{array}\right)=\left(\begin{array}{c} U_n+\Delta tF_1(U_{\star},V^1_{\star}) \\ G_1(U_{\star},V^1_{\star})\end{array} \right), \\
& \left(\begin{array}{c} \bar{U}_{n+1} \\ \bar{V}^2_{n+1} \end{array}\right)=\left(\begin{array}{c} U_{\star}+\Delta t F_2(\bar{U}_{n+1},\bar{V}^2_{n+1})\\ G_2(\bar{U}_{n+1},\bar{V}^2_{n+1})\end{array}\right), \\
& \left(\begin{array}{c} U_{n+1} \\ V^1_{n+1} \\ V^2_{n+1} \end{array}\right)= \left(\begin{array}{c} \frac{\tilde{U}_{n+1}+\bar{U}_{n+1}}{2} \\ \frac{\tilde{V}^1_{n+1}+V^1_{\star}}{2} \\ \frac{\bar{V}^2_{n+1}+V^2_{*}}{2} \end{array}\right)
\end{aligned}\right.
\end{equation}
where the operators are defined exactly as in \eqref{operADIpeac} and upwinding is used for the first derivatives as in \eqref{upwind}. We recall that the alternating application of the scheme first in the $y-x$ direction and subsequently in the $x-y$ direction allows to achieve order two of accuracy, as explained in \cite{barash}. As we will see in Section \ref{sec:numres}, the scheme \eqref{ADIAMOS} has better stability properties where the choice of the time step does not seem to depend on the size of $\varepsilon$, thus suggesting -- at least empirically -- unconditional stability.

\subsection{Primal-dual formulation of TV-$H^{-1}$ equation with penalty term} \label{sec:primdual}

An interesting alternative to the linearisations of the primal
formulation of equation \eqref{tvH-1} is motivated by earlier work on
numerically characterising elements in the subdifferential of the
total variation seminorm by primal-dual iterations, see
\cite{benning,muller,BCDS} for instance. In what follows we briefly outline such a strategy combined with ADI splitting for solving equation \eqref{tvH-1}, that is
\begin{equation} \label{pdesubdiff}
u_t=\Delta q\quad\text{in }\Omega\times (0,\infty),\quad q\in\partial |Du|(\Omega).
\end{equation}
Here, $q\in\partial |Du|(\Omega)$ by definition of the subdifferential means
\begin{equation} \label{defsubdiffTV}
q\in \partial |Du|(\Omega)\iff |Du|(\Omega)-\int_\Omega qu\,dx\leq|Dv|(\Omega)-\int_\Omega qv\,dx,\quad\forall v\in L^2(\Omega).
\end{equation}
Equivalently, if $u\in BV(\Omega)\subset L^2(\Omega)$ solves the variational problem
\begin{equation} \label{minprob}
\min_{v\in BV(\Omega)}\left\{|Dv|(\Omega)-\int_\Omega qv\,dx\right\},
\end{equation}
then, by definition of being a minimum, $u$ fulfills \eqref{defsubdiffTV}, that is $q\in\partial|Du|(\Omega)$. Inserting the definition of the total variation seminorm \eqref{TVfunct} into \eqref{minprob} we receive
\begin{equation} \label{minprob1}
\min_{v\in BV(\Omega)}\left\{\sup_{\textbf{p}\in C_0^{\infty}(\Omega;\R^2),\ \norm{\textbf{p}}_\infty\leq 1}\int_\Omega  \nabla\cdot\textbf{p} v\, dx-\int_\Omega qv\,dx\right\}
\end{equation}
which is typically known as the \emph{primal-dual} formulation of the problem \eqref{pdesubdiff}. The constraint on \textbf{p} appearing in \eqref{minprob1} can be relaxed, for instance, by a penalty method. That is, we remove the constraint from the minimisation in \eqref{minprob1} and instead add a term that penalises the functional if $\norm{\textbf{p}}_\infty>1$. A typical example for such a penalty term $F$ is
\begin{equation*}
F(s)=\frac{1}{2}\norm{\max\{s,0\}}^2_2.
\end{equation*}
With these considerations we reformulate \eqref{minprob1} as
\begin{equation} \label{minprob2}
\min_{v\in BV(\Omega)}\sup_{\textbf{p}\in C_0^\infty(\Omega;\R^2)}\left\{\int_\Omega\nabla\cdot\textbf{p} v\, dx-\frac{1}{\varepsilon}F(|\textbf{p}|-1)-\int_\Omega q v\, dx\right\}
\end{equation}
where the parameter $1\gg \varepsilon>0$ is the weight of our penalisation. We can then find the optimality conditions for solutions $\textbf{p}$ and $u$ of \eqref{minprob2} which, merged with equation \eqref{pdesubdiff}, allow us to consider the following, approximate formulation of \eqref{tvH-1}
\begin{equation}  \label{primdualsyst}
\left\{\begin{aligned}
 u_t & =\Delta q, \\
 q & =\nabla\cdot\textbf{p}, \\
 0 & = -\nabla u-\frac{1}{\varepsilon}H(\textbf{p}).
 \end{aligned}\right.
\end{equation}
In the system above we have indicated by $H$ the derivative of the penalty term $F(|\textbf{p}|-1)$, i.e.
$$
H(\textbf p) = \mathbbm{1}_{\{|\textbf p|\geq 1\}} \mathrm{sgn}(\textbf p) (|\textbf p|-1),
$$
which we linearise via its first-order Taylor approximation, that is
\begin{equation*}
H(\textbf p)\approx H(\tilde{\textbf p}) + H'(\tilde{\textbf p})(\textbf p-\tilde{\textbf p}).
\end{equation*}
Here, $H'$ denotes the Jacobian of $H$. In order to guarantee the invertibility of the now linear operator that defines the system \eqref{primdualsyst} we add an additional damping term in $\mathbf p$ as suggested, for instance, in \cite{muller}. Collecting everything, we propose the following numerical scheme for solving \eqref{primdualsyst},
\begin{equation}  \label{primdualsystlin}
\left\{\begin{aligned}
  \frac{U^{(k)}_{n+1}-U_{n}}{\Delta t}&=D_{\Delta} Q^{(k)}_{n+1}, \\
  Q^{(k)}_{n+1}&=D^{-}_{\hbox{div}}\textbf{P}^{(k)}_{n+1},\\
  0 =& -D^+_{\nabla} U^{(k)}_{n+1}-\frac{1}{\varepsilon}H(\textbf{P}^{(k-1)}_{n+1})\\
  &-\frac{1}{\varepsilon}H'(\textbf{P}^{(k-1)}_{n+1})(\textbf{P}^{(k)}_{n+1}-\textbf{P}^{(k-1)}_{n+1})-\tau^k(\textbf{P}^{(k)}_{n+1}-\textbf{P}^{(k-1)}_{n+1}),
 \end{aligned}\right.
\end{equation}
which consists of two nested iterations, the inner damped Newton iteration
with index $k$ and an outer time stepping with index $n$ for the evolution in time of $U$. Here, $\tau^k$
is a sequence of parameters controlling the damping of the Newton iteration in every time step. The authors in \cite{muller,BCDS} suggest to start with a large value $\tau^0$ and then decrease it in the inner iterations to ensure efficient convergence.

System \eqref{primdualsystlin} could now be discretised in space and either solved directly using Newton's iterations (compare \cite{BCDS} where the authors used this strategy to solve equation \eqref{TVWassflow}) or fitted into a numerical ADI scheme. A detailed presentation and
analysis of the resulting scheme is beyond the scope of the present paper and a matter of future research.

\section{Numerical results}  \label{sec:numres}
\setcounter{equation}{0}

In this section we discuss the numerical performance of the ADI methods
proposed in this paper. To do so, we present numerical experiments for a Gaussian, an oscillatory function made up of sine and cosine functions and grayscale images as initial conditions. The paper is furnished with numerical examples for image inpainting.

We start presenting the numerical results
obtained applying the \emph{Hundsdorfer} scheme \eqref{ADIgen} to compute
the numerical solution of the biharmonic equation
\eqref{biharm} and to the equivalent system \eqref{biharm1}. Next, we report on numerical experiments for the TV-$H^{-1}$ equation \eqref{tvH-1} using the
\emph{Hundsdorfer} scheme \eqref{ADI1} with \eqref{ADITV}-\eqref{ADITV2}. The choice of the time step size $\Delta t$ of this scheme is constrained by very strong stability restrictions related to the size of the regularising parameter $\varepsilon$. A possible dependence of this type for stability was already discussed in Section~\ref{sec:ADIappl}. Our numerical tests for the application of the \emph{Peaceman-Rachford} scheme \eqref{ADIschpeaceman} to the TV-$H^{-1}$ equation show the same stability behaviour and are therefore not included in the paper. Finally, we show the numerical results for the solution of the modified TV-$H^{-1}$ system \eqref{systemlin2} solved with the AMOS scheme \eqref{ADIAMOS}. This scheme shows stable behaviour independent of the size of $\Delta t$ and $\varepsilon$.

\subsection{The biharmonic equation: numerical results} \label{subsec:biharm}

The linear system that arises from the application of the \emph{Hundsdorfer} scheme \eqref{ADI1} with \eqref{ADIbiharm} to the biharmonic equation \eqref{biharm1} is solved by the Schur complement method.

We consider a grid of $100\times100$ grid points for the discretisation of the spatial domain $\Omega$ being the
unit square. We analyse the example of
the evolution of the biharmonic equation having as initial condition
$U_0$ the Gaussian density
$U^0_{ij}=\exp{(-((x_i-1/2)^2}$ $+(y_j+1/2)^2)/\gamma^2)$ where the
variance $\gamma^2$ is equal to $100$. Figure \ref{ADI2} shows two iterates of the scheme with $\theta=\sigma=1/2$ for a time-step size $\Delta t=C(\Delta x)^2$, where $C$ now and for the rest of the discussion is equal to $0.1$. Even for such a big choice of $\Delta t$ the result is stable and the convergence of the solution to the steady state is quick. Considering another initial condition $U_0$ and taking, for instance, some very oscillatory function does not effect the performance of the method. This confirms the unconditional stability of this scheme when applied to a linear fourth-order equation such as the biharmonic equation \eqref{biharm}, compare \cite{hout1}. In the following section we will see that the unconditional stability of the \emph{Hundsdorfer} scheme breaks down when applied to the nonlinear fourth-order diffusion equation TV-$H^{-1}$ \eqref{tvH-1}.

\begin{figure}[!h]
\begin{subfigure}{0.3\textwidth}
\includegraphics[height=3.5cm,width=4.5cm]{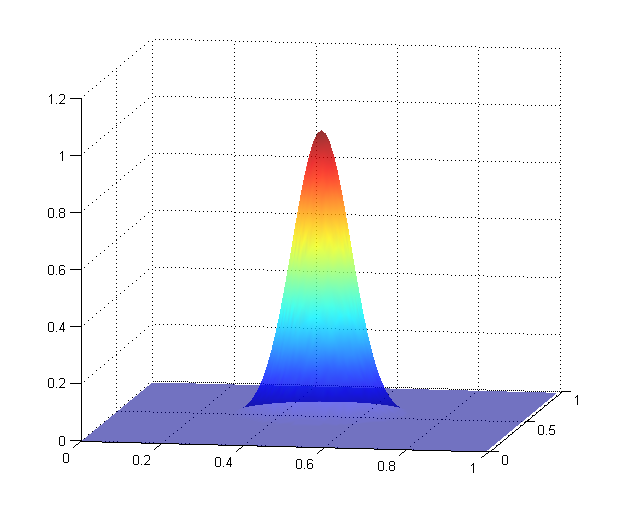}
\caption{Gaussian initial cond.}
\end{subfigure}
\hspace{0.5cm}
\begin{subfigure}{0.3\textwidth}
\includegraphics[height=3.5cm,width=4.5cm]{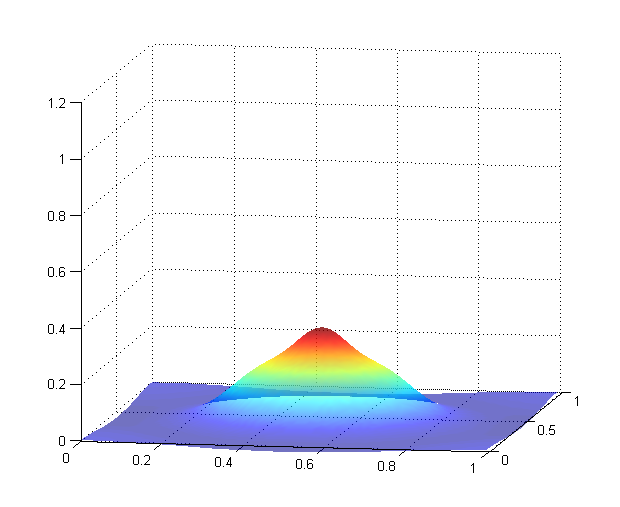}
\caption{Solution $U_4$}
\end{subfigure}
\begin{subfigure}{0.3\textwidth}
\includegraphics[height=3.5cm,width=4.5cm]{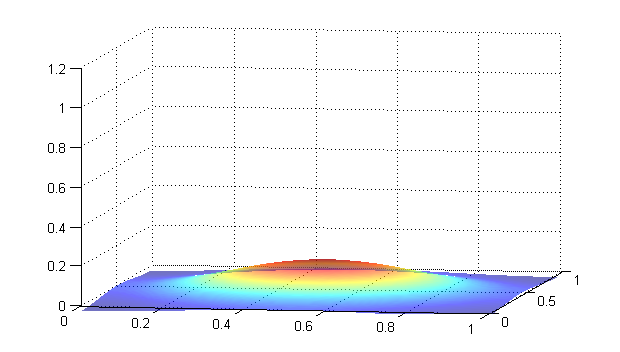}
\caption{Solution $U_{20}$}
\end{subfigure}
\caption{Evolution of the biharmonic equation \eqref{biharm1} solved with the \emph{Hundsdorfer} scheme \eqref{ADI1} with $\Delta t=C(\Delta x)^2$.}
\label{ADI2}
\end{figure}

\subsection{TV-$H^{-1}$ equation: numerical results} \label{subsec:restv}

In what follows we provide numerical discussion for applying the directional splitting schemes introduced in Section \ref{sec:ADIappl} for the numerical solution of the regularised TV-$H^{-1}$ equation \eqref{systemlin1}. In particular we consider \emph{Hundsdorfer} ADI scheme \eqref{ADI1} with \eqref{ADITV}, \eqref{ADITV2} and the AMOS scheme
\eqref{ADIAMOS}. 
Once again we use the Schur complement technique to solve the linear systems that arise in the numerical solution of these schemes.  The
expected edge-preserving behaviour of the TV-$H^{-1}$ equation \eqref{tvH-1} which is due to the subgradient of the TV functional
is closely related to the size of the regularising parameter
$\varepsilon$ used in \eqref{systemlin1}. This parameter, in fact, becomes a
``measure" of how close the nonlinear diffusion is to the linear,
biharmonic one. Choosing $\varepsilon$ too big the smoothing behaviour of \eqref{systemlin1} is similar to
the one of the biharmonic equation \eqref{biharm}. In this case the stability properties of the \emph{Hundsdorfer} scheme applied to the nonlinear equation are close to the ones discussed for the biharmonic equation in the previous section. On the
other hand, small values of $\varepsilon$ keep the regularised version of the subgradient of the total variation close to its exact characterisation and hence solution show edge-preserving features. However, stability issues that disturb and worsen the
performance of the methods appear. The explicit treatment of some of the terms in the \emph{Hundsdorfer} and the \emph{Peaceman-Rachford} schemes (namely, the mixed derivative term
in \eqref{ADI1} and the half-direction forward Euler steps in
\eqref{ADIschpeaceman}) seem to influence the stability of the schemes in a negative way. In particular, the time step sizes $\Delta t$ have to be decreased significantly with small values of $\varepsilon$. Moreover, the choice of the initial condition also influences the stability properties. For instance, for smooth Gaussian initial conditions with large support the time steps can be chosen larger than for an oscillatory initial condition, see Figures \ref{figadi1}-\ref{figadi3}. These issues are resolved by the AMOS scheme \eqref{ADIAMOS} which did not show dependence of $\Delta t$ on the size of $\varepsilon$ nor on the type of initial condition in order to get stable results.

In the following we present the numerical results obtained considering
the linearisation of the system \eqref{systemlin1} given by
\eqref{timestep1} and solved by the \emph{Hundsdorfer} ADI method
\eqref{ADI1}. We consider as initial conditions both the Gaussian density
$U^0_{ij}=\exp{(-((x_i-1/2)^2}$ $+(y_j+1/2)^2)/\gamma^2)$ with
$\gamma^2=100$ and the oscillatory function $U^0_{ij}=\sin(8\pi
x_i)+\cos(8\pi y_j)$. Figure \ref{figadi1} shows iterates of the \emph{Hundsdorfer} scheme with $\theta=\sigma=1/2, \varepsilon=5$ and $\Delta t=C(\Delta
x)^3$ applied to the Gaussian datum. The value $\varepsilon=5$ is the smallest possible value that can be used in order to get stable solutions of the \emph{Hundsdorfer} scheme with $\Delta t=C(\Delta x)^3$. Figure \ref{figadi3} shows the evolution of the process for the
oscillatory datum with the same choice of $\varepsilon$ as before. In this case to
get stable results smaller time steps are needed, thus showing stability dependence
on the initial condition.

\begin{figure}[!h]
\begin{subfigure}{0.3\textwidth}
\includegraphics[height=3.5cm,width=4.5cm]{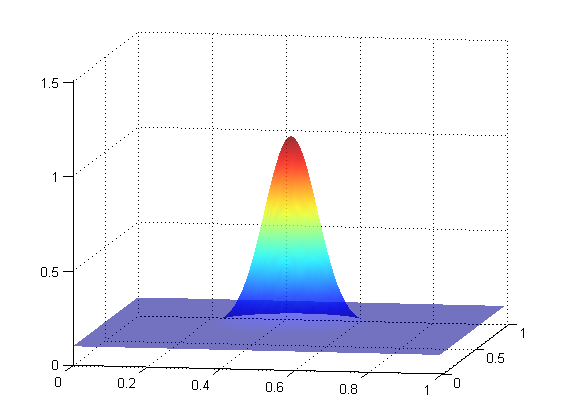}
\caption{Gaussian initial cond.}
\end{subfigure}
\hspace{0.5cm}
\begin{subfigure}{0.3\textwidth}
\includegraphics[height=3.5cm,width=4.5cm]{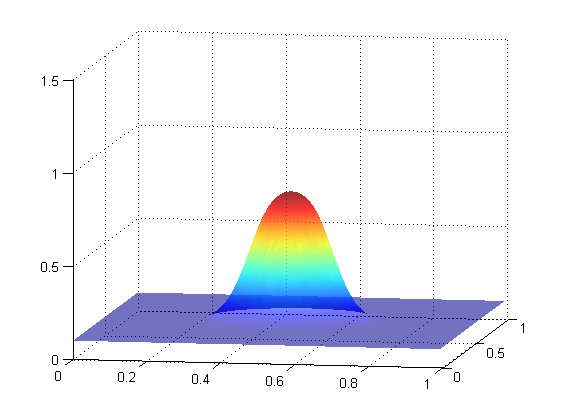}
\caption{Solution $U_{100}$}
\end{subfigure}
\begin{subfigure}{0.3\textwidth}
\includegraphics[height=3.5cm,width=4.5cm]{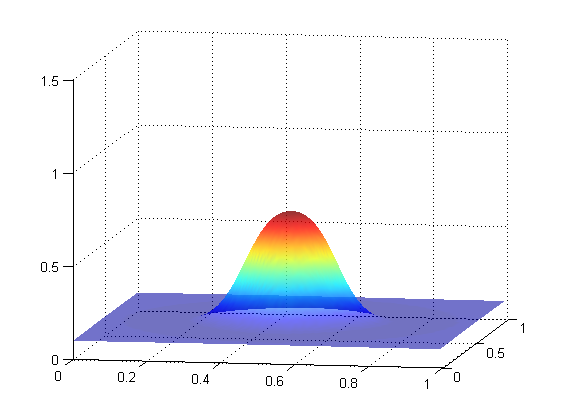}
\caption{Solution $U_{200}$}
\end{subfigure}
\caption{Evolution of the TV-$H^{-1}$ equation \eqref{tvH-1} by the \emph{Hundsdorfer} scheme \eqref{ADI1} with $\Delta t=C(\Delta x)^3$ and $\varepsilon=5$.}
\label{figadi1}
\end{figure}

\begin{figure}[!h]
\begin{subfigure}{0.3\textwidth}
\includegraphics[height=3.5cm,width=4.5cm]{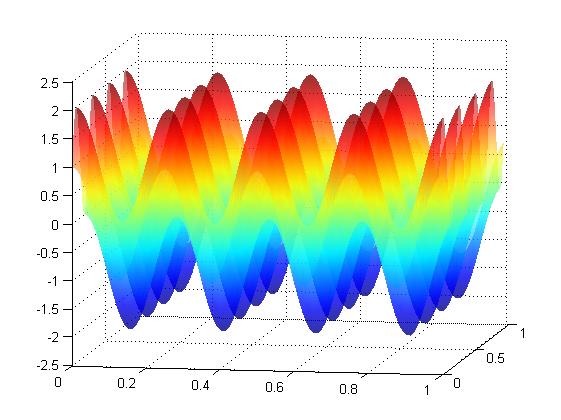}
\caption{Oscillatory int. cond.}
\end{subfigure}
\hspace{0.5cm}
\begin{subfigure}{0.3\textwidth}
\includegraphics[height=3.5cm,width=4.5cm]{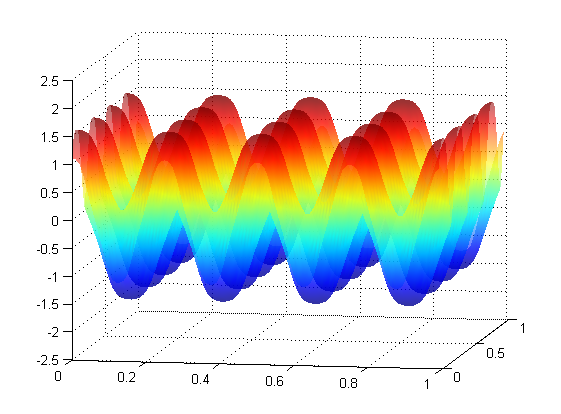}
\caption{Solution $U_{2500}$}
\end{subfigure}
\begin{subfigure}{0.3\textwidth}
\includegraphics[height=3.5cm,width=4.5cm]{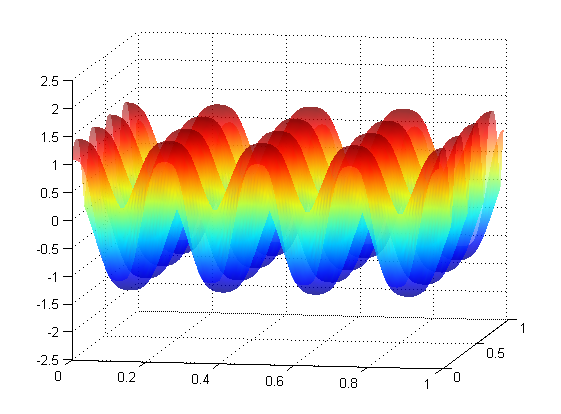}
\caption{Solution $U_{5000}$}
\end{subfigure}
\caption{Evolution of the TV-$H^{-1}$ equation \eqref{tvH-1} by the \emph{Hundsdorfer} scheme \eqref{ADI1} with decreased $\Delta t=C(\Delta x)^4$ and $\varepsilon=5$ to control stability.}
\label{figadi3}
\end{figure}

In Figures \ref{plotstability1}--\ref{plotstability2} we describe this stability dependence in more detail. For the two different choices of the initial conditions considered above, the smallest values $\varepsilon$ needed for stability as the size of $\Delta t$ increases are plotted. To compare the different setups of the \emph{Hundsdorfer} scheme with respect to $\theta$, these tests were performed for different values of the stabilising parameter $\theta=0,1/2,1/2+\sqrt{3}/6,1$ thus resulting into four graphs per test. Note that different values of $\theta$ (compare \cite{hunds,hunds1,hout1,hout2} for such choices) result into different weighting of implicit and explicit terms, that means considering a fully explicit method for $\theta=0$ and implicit contributions in the unidirectional steps of \eqref{ADI1} in the other cases (see Section \ref{subsec_HUNDS}). For each of these graphs, their epigraph corresponds to the region of \emph{stability} of the method (according to Definition \ref{def_stab}). To obtain such graphs we have considered different values of the constant $C$ and of the regularising parameter $\varepsilon$ for time step sizes of the order $(\Delta x)^k, k=2,3,4$. For the choice $\theta=0$ stable solutions are only obtained by very restrictive choices of $\varepsilon$. This situation improves for $\theta>0$. In particular, for values of $\theta$ close to $1$ the stability constraint on the size of $\varepsilon$ is reduced. However, in all cases, these plots show a clear dependence of the stability of the \emph{Hundsdorfer} scheme on the strength of the nonlinearity (encoded by the size of $\varepsilon$). This creates numerical difficulties in the attempt of increasing the time step size to get an efficient numerical scheme for solving the approximated problem \eqref{systemlin1} for sufficiently small values of $\varepsilon$.

\begin{figure}[!h]
\centering
\includegraphics[height=5cm]{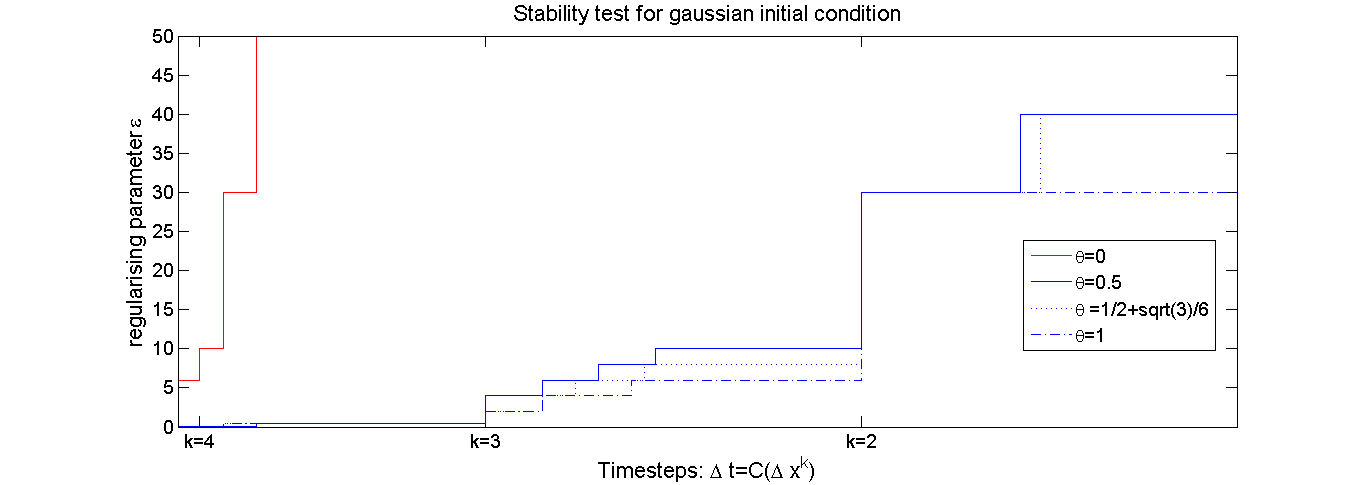}
\caption{Stability test for the numerical solution of \eqref{timestep1} solved with the \emph{Hundsdorfer} scheme \eqref{ADI1} with initial condition $U^0_{ij}=\exp{-(((x_i-1/2)^2+(y_j+1/2)^2)/\gamma^2)}$ with $\gamma^2=100$ for different choices of stabilising parameter $\theta$. For each time step size the minimum value of $\varepsilon$ for which we get stability is plotted.}
\label{plotstability1}
\end{figure}

\begin{figure}[!h]
\centering
\includegraphics[height=5cm,width=14cm]{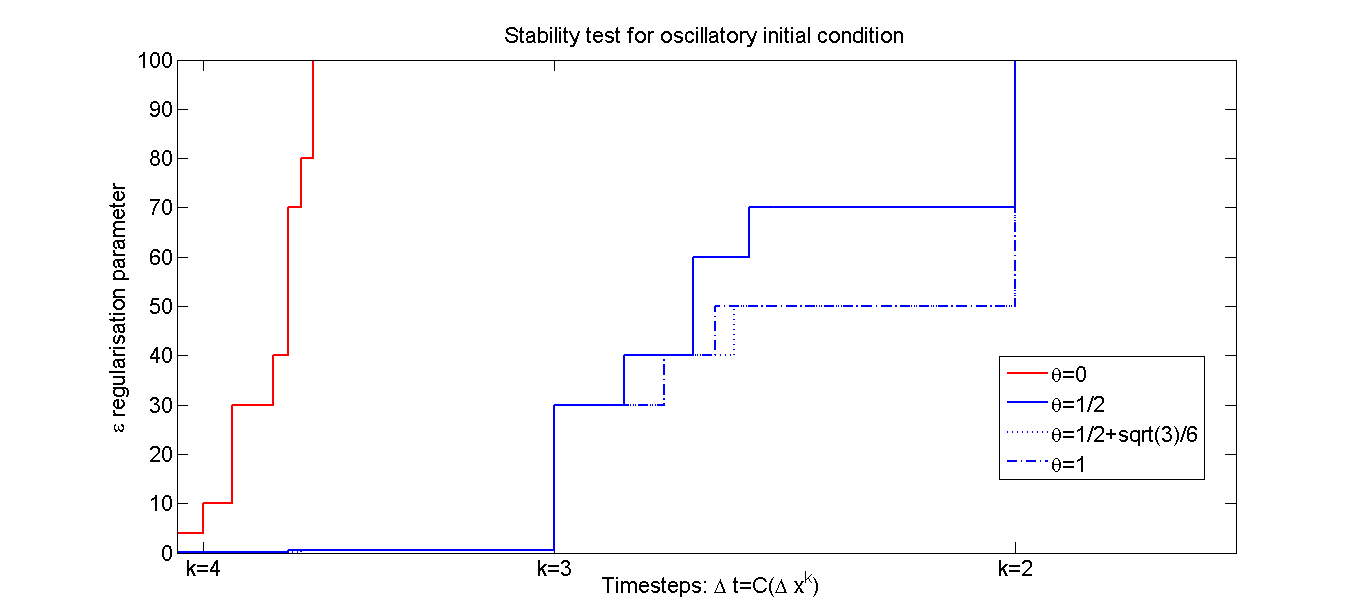}
\caption{Stability test for the numerical solution of \eqref{timestep1} solved with the \emph{Hundsdorfer} scheme \eqref{ADI1} with initial condition $U^0_{ij}=\sin(8\pi x_i)+\cos(8\pi y_j)$. Comparison with Figure \ref{plotstability1} shows dependence on the initial condition for admissible values of $\varepsilon$ providing stability.}
\label{plotstability2}
\end{figure}

We do not present here the numerics related to the application of the \emph{Peaceman-Rachford} method \eqref{ADIschpeaceman} to the TV-$H^{-1}$ equation \eqref{systemlin2} as the stability issues resemble the ones described above. In order to overcome such problems, we present in the following the results related to the application of the ADI AMOS scheme \eqref{ADIAMOS} solving the slightly modified system \eqref{systemlin2}.

Due to the implicit character of the scheme \eqref{ADIAMOS}, improved stability properties are expected while keeping the advantages of the directional splitting strategy, that is each can be solved very efficiently. In the Figures \ref{figAMOS1}-\ref{figAMOS2} we show the evolution of the TV-$H^{-1}$ equation \eqref{systemlin2} for the Gaussian initial condition as above for $\Delta t=C(\Delta x)^3$ and $\Delta t=C(\Delta x)^2$ with fixed $\varepsilon=0.001$. We observe that the time discretisation provides stable results even for large $\Delta t$. However, as clearly visible in Figure \ref{figAMOS2}, choosing $\Delta t$ too large badly affects time accuracy.

\begin{figure}[!h]
\begin{center}
\includegraphics[height=5cm]{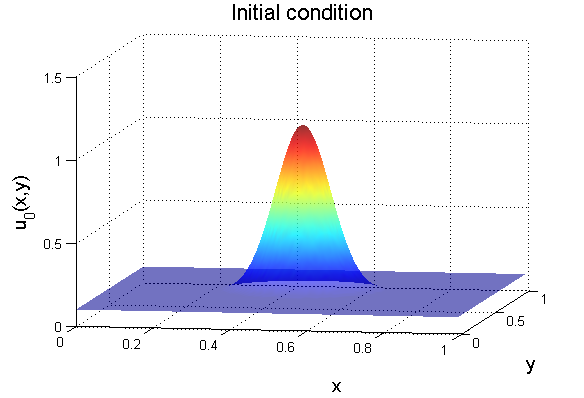}
\end{center}
\caption{Initial condition}
\end{figure}

\begin{figure}[!h]
\begin{subfigure}{0.3\textwidth}
\includegraphics[height=3.5cm,width=4.5cm]{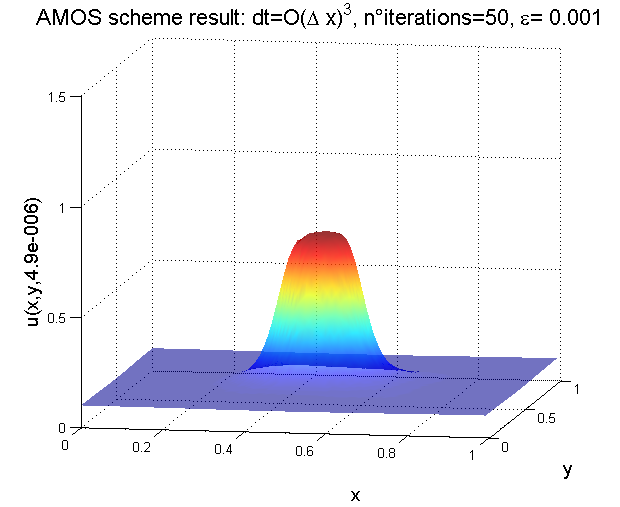}
\caption{Solution $U_{50}$}
\end{subfigure}
\hspace{0.5cm}
\begin{subfigure}{0.3\textwidth}
\includegraphics[height=3.5cm,width=4.5cm]{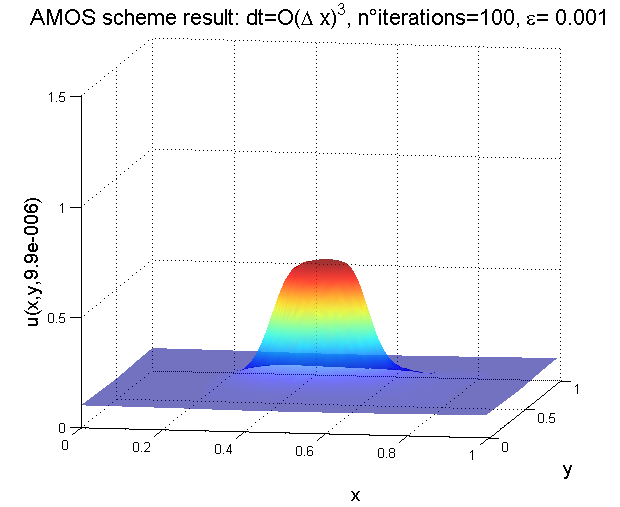}
\caption{Solution $U_{100}$}
\end{subfigure}
\begin{subfigure}{0.3\textwidth}
\includegraphics[height=3.5cm,width=4.5cm]{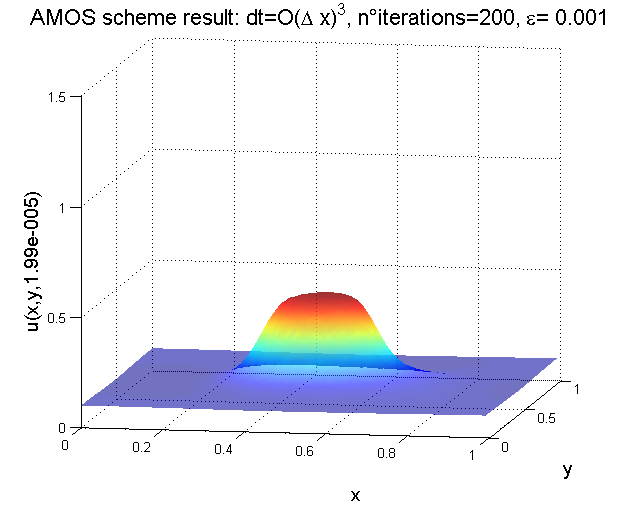}
\caption{Solution $U_{200}$}
\end{subfigure}
\caption{Evolution of the modified TV-$H^{-1}$ equaation \eqref{systemlin2} with the AMOS scheme \eqref{ADIAMOS} with $\Delta t=C(\Delta x)^3$ and $\varepsilon=0.001$.}
\label{figAMOS1}
\end{figure}

\begin{figure}[!h]
\begin{subfigure}{0.3\textwidth}
\includegraphics[height=3.5cm,width=4.5cm]{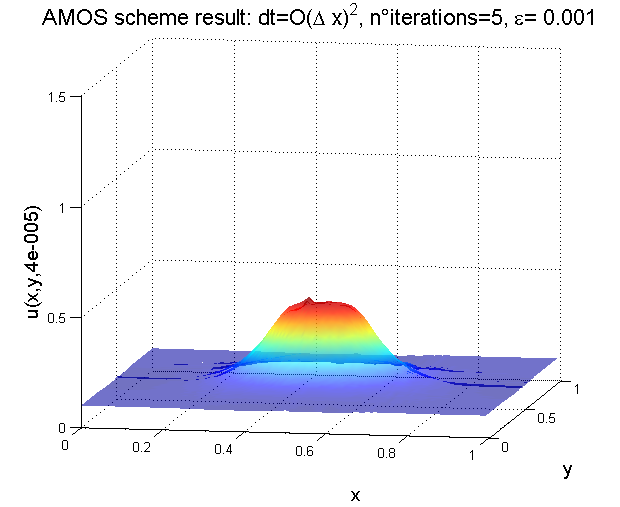}
\caption{Solution $U_{5}$}
\end{subfigure}
\hspace{0.5cm}
\begin{subfigure}{0.3\textwidth}
\includegraphics[height=3.5cm,width=4.5cm]{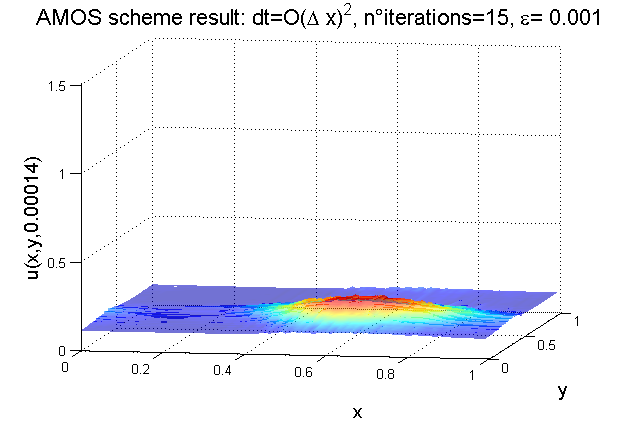}
\caption{Solution $U_{15}$}
\end{subfigure}
\begin{subfigure}{0.3\textwidth}
\includegraphics[height=3.5cm,width=4.5cm]{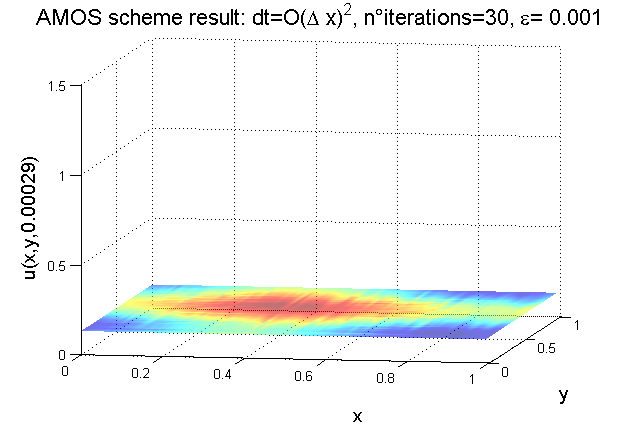}
\caption{Solution $U_{30}$}
\end{subfigure}
\caption{Evolution of the modified TV-$H^{-1}$ equaation \eqref{systemlin2} with the AMOS scheme \eqref{ADIAMOS} with $\Delta t=C(\Delta x)^2$ and $\varepsilon=0.001$.}
\label{figAMOS2}
\end{figure}

The convergence rate of the AMOS scheme \eqref{ADIAMOS} for the choice of $\Delta t=C(\Delta x)^3$ and regularising parameter $\varepsilon=0.001$ when applied to the Gaussian initial condition is presented in Figure \ref{conv:steady}. We observe an exponential-type decay for the $\ell^\infty$ norm of the difference between the iterative solution $U_n$ of \eqref{ADIAMOS} and the steady state $U_\infty$ which has been computed numerically beforehand by iterating the scheme till the quantity $\norm{U_{n+1}-U_{n}}_{\infty}/(MN)\leq10^{-10}$. For comparison, the exponential function $(7\cdot10^{-6}) e^{-0.05x}$, has been plotted. Figure \ref{energy:decay} shows the decay of total variation energy \eqref{TVfunct} towards the energy of the steady state $U_\infty$ against the number of iterations. This shows that although the modified TV-$H^{-1}$ equation \eqref{systemlin2} is not as the same as the gradient flow of the total variation in the space $H^{-1}$, the total variation energy is still decreasing in every iteration.

\begin{figure}[!h]
\centering
\includegraphics[height=6cm,width=14cm]{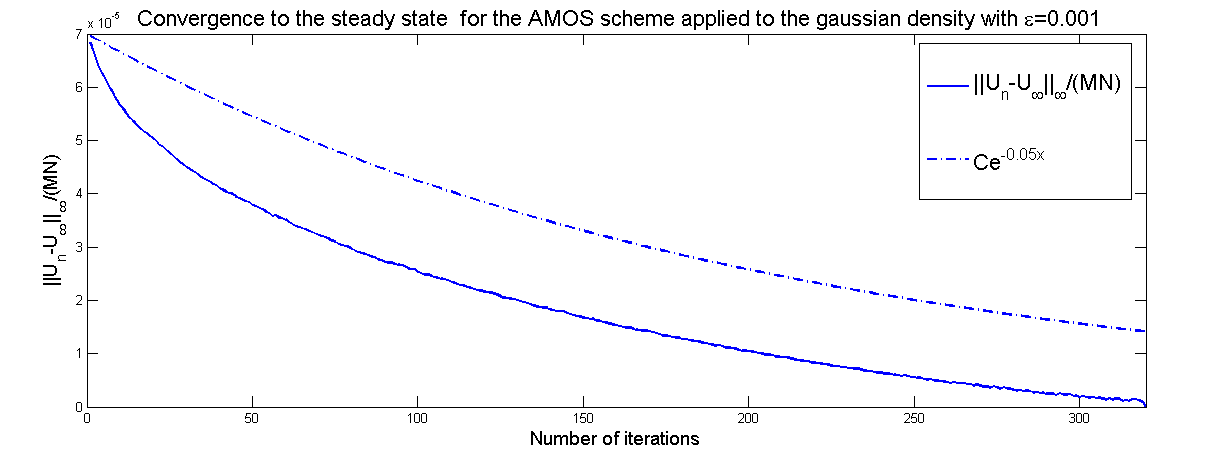}
\caption{Convergence to the steady state for the numerical solution of the modified TV-$H^{-1}$ equation \eqref{systemlin2} computed with the AMOS scheme \eqref{ADIAMOS}.}
\label{conv:steady}
\end{figure}

\begin{figure}[!h]
\centering
\includegraphics[height=6cm,width=15cm]{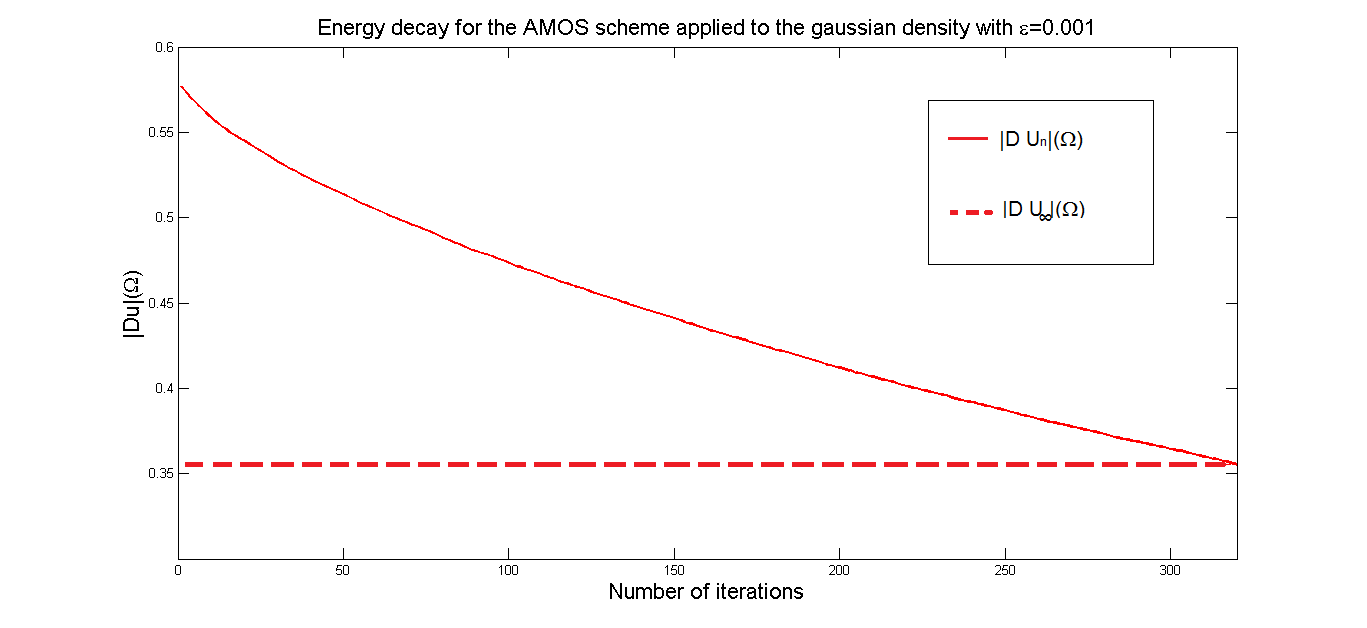}
\caption{Energy decay for the numerical solution of the modified TV-$H^{-1}$ equation \eqref{systemlin2} computed with the AMOS scheme \eqref{ADIAMOS}.}
\label{energy:decay}
\end{figure}
\newpage

Motivated by our original purposes of applying the ADI schemes in the imaging framework, we present in Figure \ref{scalespace} the scale space properties of system \eqref{systemlin2} solved with the AMOS scheme \eqref{ADIAMOS} for a  $120\times 120$ image. The time step size is $\Delta t=C(\Delta x)^3$ and the regularising parameter $\varepsilon=0.001$. Due the nonlinear nature of the equation, the diffusion is anisotropic.

\begin{figure}[!h]
\begin{subfigure}{0.3\textwidth}
\centering
\includegraphics[height=2.5cm]{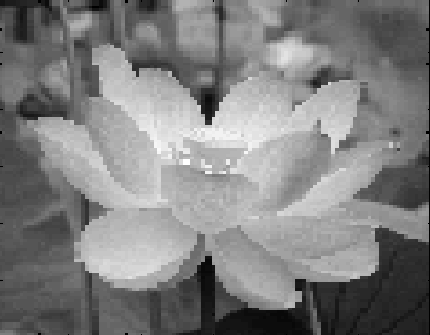}
\caption{Initial condition.}
\end{subfigure}
\quad
\begin{subfigure}{0.3\textwidth}
\centering
\includegraphics[height=2.5cm]{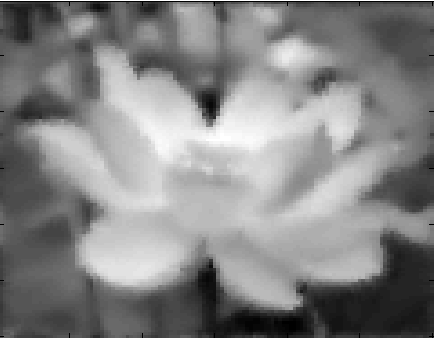}
\caption{Solution $U_{2}$.}
\end{subfigure}
\quad
\begin{subfigure}{0.3\textwidth}
\centering
\includegraphics[height=2.5cm]{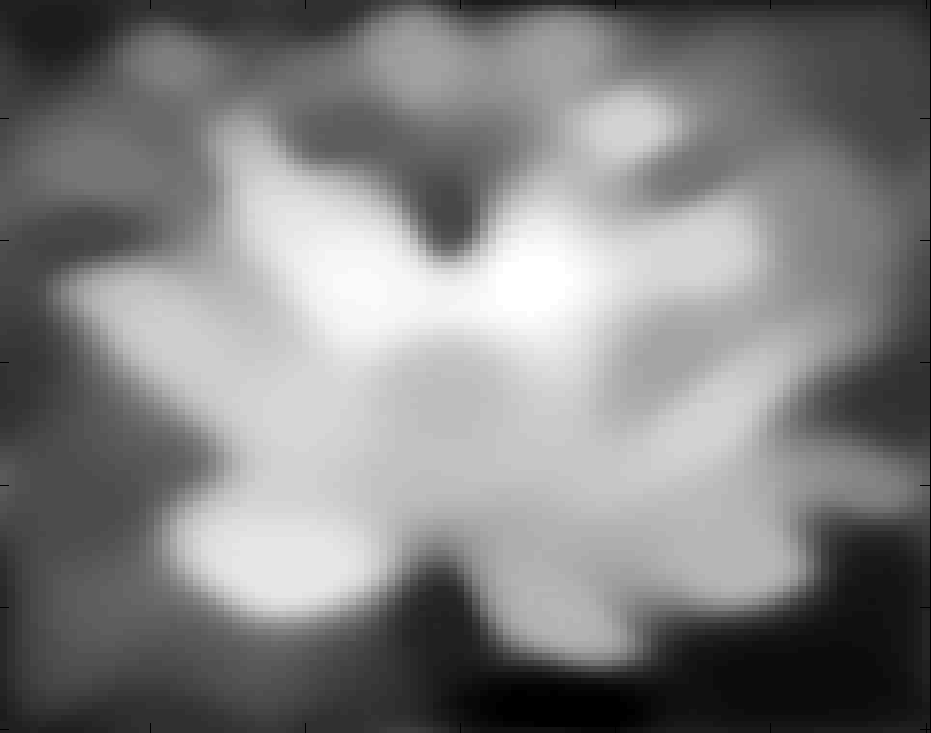}
\caption{Solution $U_{10}$.}
\end{subfigure}\\
\begin{subfigure}{0.3\textwidth}
\centering
\includegraphics[height=2.5cm]{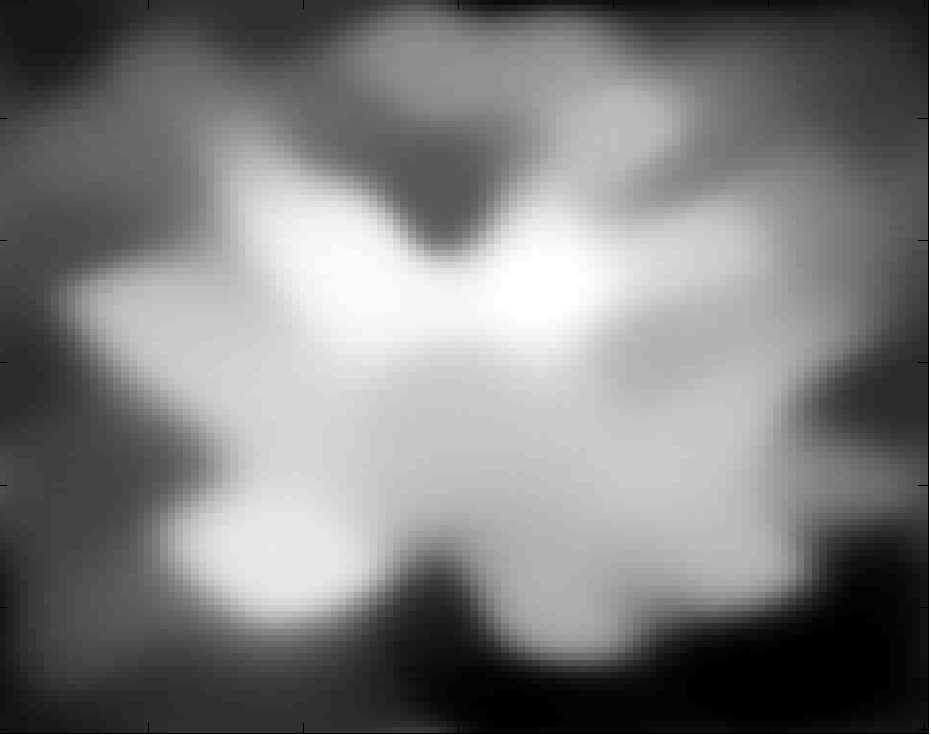}
\caption{Solution $U_{20}$.}
\end{subfigure}
\quad
\begin{subfigure}{0.3\textwidth}
\centering
\includegraphics[height=2.5cm]{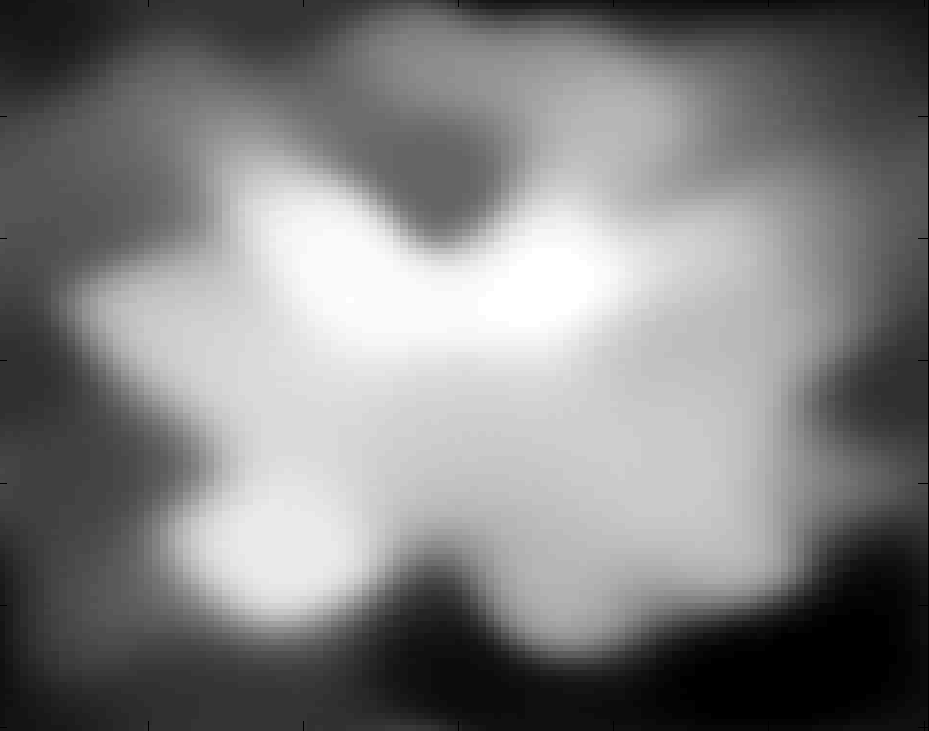}
\caption{Solution $U_{30}$.}
\end{subfigure}
\quad
\begin{subfigure}{0.3\textwidth}
\centering
\includegraphics[height=2.5cm]{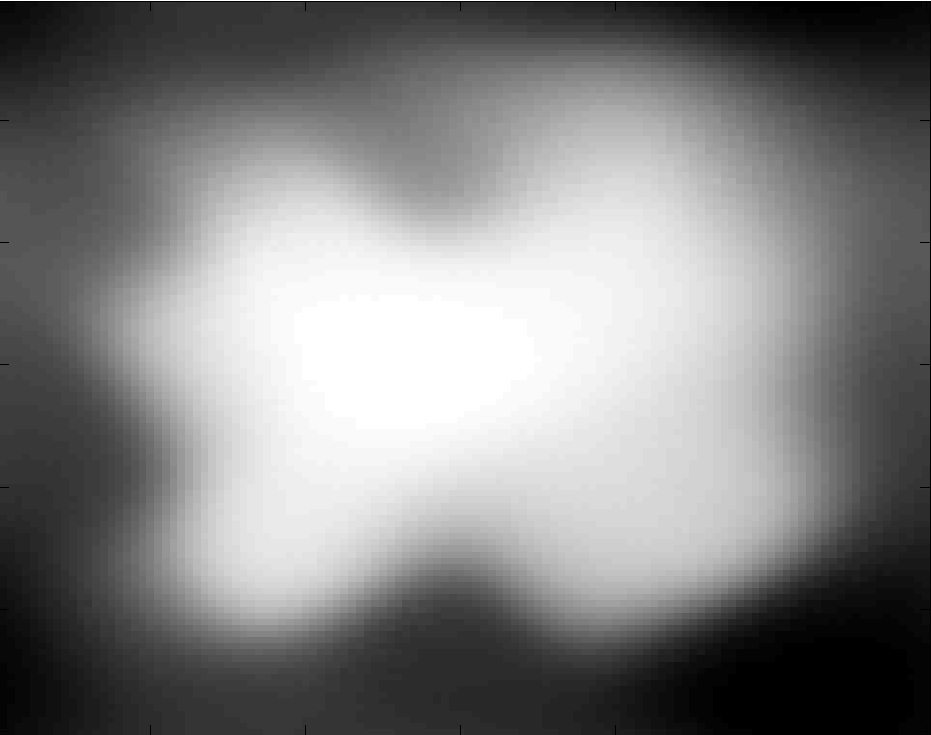}
\caption{Solution $U_{120}$.}
\end{subfigure}
\caption{Scale space properties for the iterates of the numerical solution of \eqref{systemlin2} computed with the AMOS scheme \eqref{ADIAMOS}, $\Delta t=C(\Delta x)^3,\  \varepsilon=0.001$.}
\label{scalespace}
\end{figure}


Finally, we present some numerical results obtained by using the AMOS scheme \eqref{ADIAMOS} to solve the $H^{-1}$-inpainting model \eqref{tvh-1inp} with the modified TV-$H^{-1}$ equation given by \eqref{systemlin2}. The constraint $u=f$ outside of the inpainting domain is approximately enforced by adding the fidelity term $\lambda\cdot\mathbbm{1}_{\Omega\setminus D}(f-u)$ to the equation, where $\lambda$ measures how close the reconstructed image is to the original one. In Figure \ref{figinp1} we show the result for inpainting a $150\times 150$ image of a cross: the initial condition is inpainted in $1000$ iterations using a time step size of$\Delta t=C(\Delta x)^3$ and the regularising parameter $\varepsilon$ is chosen to be $\varepsilon=0.001$, thus allowing the preservation of edges additionally to fulfilling the connectivity principle that is a consequence of the fourth-order of the method.

\begin{figure}[!h]
\begin{center}
\includegraphics[width=5cm]{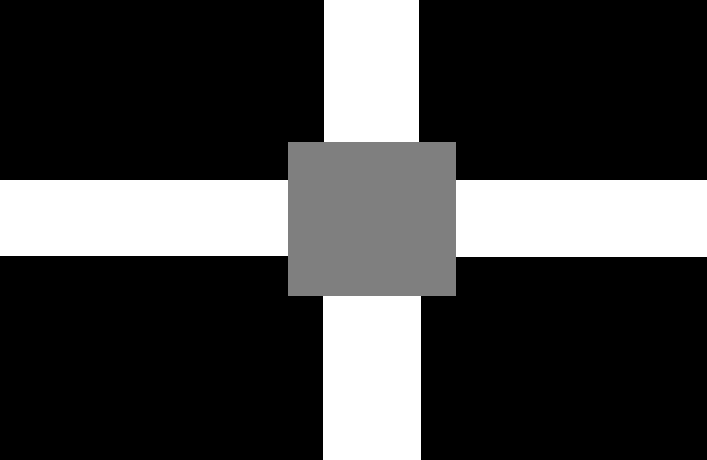}
\hspace{0.5cm}
\includegraphics[height=3.3cm,width=5cm]{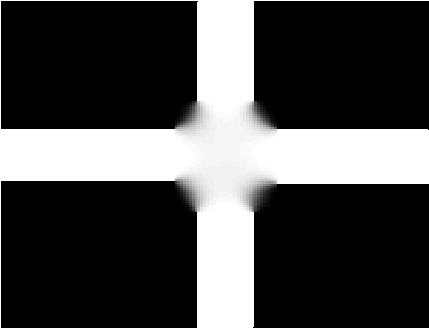}
\end{center}
\caption{Solution of inpainting problem obtained with $1000$ iterations of the ADI AMOS scheme \eqref{ADIAMOS}, $\Delta t=C(\Delta x)^3, \varepsilon=0.001$.}
\label{figinp1}
\end{figure}

As a second example, we consider a grayvalue $300\times 300$ photograph of a toucan in Figure~\ref{figinp2}. Its reconstructions is obtained in only $20$ iterations using the AMOS scheme \eqref{ADIAMOS} with $\Delta t=C(\Delta x)^3$ and $\varepsilon=0.001$.

\begin{figure}[!h]
\begin{center}
\includegraphics[height=5cm]{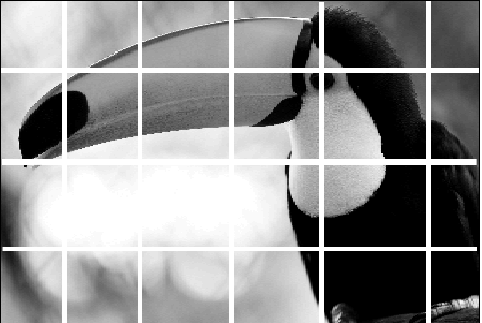}
\hspace{0.5cm}
\includegraphics[height=5cm]{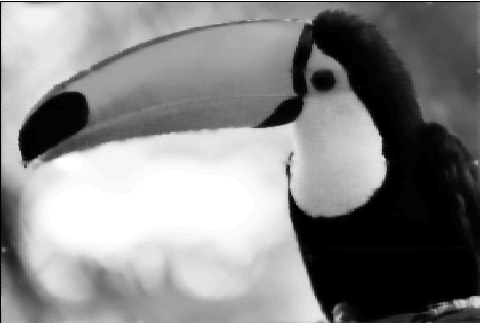}
\end{center}
\caption{Solution of the inpainting problem obtained with ADI AMOS scheme \eqref{ADIAMOS} after $20$ iterations, $\Delta t=C(\Delta x)^3, \varepsilon=0.001$.}
\label{figinp2}
\end{figure}

\section{Conclusion}
In this paper we have studied the applicability of directional
splitting techniques to fourth-order nonlinear diffusion equations. In
particular we have considered the TV-$H^{-1}$ equation which is the $H^{-1}$ gradient flow of the total
variation that emerges from imaging applications such image
denoising and inpainting.

The numerical challenges when solving this equation are both its
fourth-differential order and the strong nonlinearity
due to the total variation subgradient in the equation. In former work
on numerical schemes for this equation these issues have been either
addressed by using explicit time stepping schemes with tiny step sizes,
semi-implicit schemes with heavy damping, e.g. \cite{schon}, or by
fully implicit schemes, e.g. \cite{BuDiWei}, which are computationally
expensive to solve. Having this in mind directional splitting seems to
be a promising compromise. The computational cost of each of its
iterations is very small due to the triangular form of the system
matrices, while the stability conditions are improved when compared to
an explicit in time discretisation.

We have proposed three different ADI methods applied to two
linearisations of the TV-$H^{-1}$ equation and presented numerical results for
Gaussian-type initial conditions, image smoothing and image
inpainting. Our investigation of stability properties of the schemes
results into the following observation: any explicit terms, due to the type of splitting used, result into heavy stability conditions on the size of the time steps. For fourth-order equations these restrictions usually turn out to make a choice of $\Delta t$ as small as $(\Delta x)^4$ necessary, thus preventing an efficient numerical solution of the equation. In particular, even if each iteration of the numerical scheme is very cheap -- due to its ``one-dimensional'' character -- a large number of them have to be computed in order to see any significant change of the solution in time. The AMOS scheme provides a stable and cheap numerical scheme for solving a slightly modified version of the TV-$H^{-1}$ equation, which turns this fourth-order nonlinear equation numerically tractable and hence makes it more attractive for imaging applications.

\paragraph{Acknowledgements.}
Carola-Bibiane Sch\"{o}nlieb acknowledges the financial support provided by the Cambridge Centre for Analysis (CCA), the Royal Society International Exchanges Award IE110314 for the project \emph{High-order Compressed Sensing for Medical Imaging}, the EPSRC first grant Nr. EP/J009539/1 \emph{Sparse \& Higher-order Image Restoration}, and the EPSRC / Isaac Newton Trust Small Grant on \emph{Non-smooth geometric reconstruction for high resolution MRI imaging of fluid transport in bed reactors}. Further, this publication is based on work supported by Award No.
KUK-I1-007-43 , made by King Abdullah University of Science and Technology (KAUST).

\end{document}